\documentclass[12pt]{amsart}
\usepackage[a4paper, total={156mm, 224mm}, centering]{geometry}
\numberwithin{equation}{section}

\usepackage{amssymb}
\usepackage{mathtools}
\usepackage{microtype}
\usepackage{hyperref,hyphenat}
\hypersetup{hidelinks}

\allowdisplaybreaks

\newtheorem{theorem}{Theorem}[section]
\newtheorem{proposition}[theorem]{Proposition}
\newtheorem{lemma}[theorem]{Lemma}
\newtheorem{corollary}[theorem]{Corollary}
\newtheorem{conjecture}[theorem]{Conjecture}
\theoremstyle{definition}

\newtheorem{question}[theorem]{Question}
\theoremstyle{remark}
\newtheorem{remark}[theorem]{Remark}

\newcommand{\A}{\mathbb{A}}
\newcommand{\C}{\mathbb{C}}
\newcommand{\Q}{\mathbb{Q}}
\newcommand{\R}{\mathbb{R}}
\newcommand{\G}{\mathbb{G}}
\newcommand{\Z}{\mathbb{Z}}
\newcommand{\sM}{\mathcal{M}}
\newcommand{\sO}{\mathcal{O}}
\newcommand{\sC}{\mathcal{C}}
\newcommand{\sI}{\mathcal{I}}
\newcommand{\sX}{\mathcal{X}}

\newcommand{\fm}{\mathfrak{m}}
\newcommand{\Pone}{\mathbb{P}^{1}}
\newcommand{\Rat}{\mathrm{Rat}}
\newcommand{\Aff}{\mathrm{Aff}}
\newcommand{\Conf}{\mathrm{Conf}}
\newcommand{\Fix}{\mathrm{Fix}}
\newcommand{\PGL}{\mathrm{PGL}}

\title[Generic reconstruction from periods one and two]{Generic reconstruction of rational maps from multipliers of periods one and two}

\author{Geng-Rui Zhang}
\address{School of Mathematical Sciences, Peking University, Beijing 100871, China}
\email{grzhang@stu.pku.edu.cn}
\email{chibasei@163.com}

\date{September 6, 2026}

\subjclass[2020]{Primary 37P45, 37F46; Secondary 37P35, 37P05, 14D20, 14E05}
\keywords{Multiplier spectrum, moduli space of rational maps, periodic points,
fixed-point indices, non-archimedean degeneration}

\begin{document}

\begin{abstract}
For every integer $d\geq2$ and every field of characteristic different from $2$, we prove that on the moduli space $\sM_d$ of degree-$d$ rational maps, the multiplier spectrum morphism formed from the periodic points of periods one and two is birational to the closure of its image. Consequently, over every algebraically closed field of characteristic different from $2$, it is generically injective, which proves a recent conjecture of Ji and Xie in characteristic zero. The proof uses a fixed-index normal form, a non-archimedean degeneration of two-cycles, and birational reconstruction of an affine fixed-point configuration from pair invariants.
\end{abstract}

\maketitle
\tableofcontents

\section{Introduction}
Throughout the paper, $k$ denotes an algebraically closed field of characteristic different from $2$.

\subsection{Multiplier spectra and the main theorem}
Fix an integer $d\geq2$. Let $f:\Pone_k\to\Pone_k$ be a rational map of degree $d$. If $z_0\in\Pone(k)$ is $f$-periodic of exact period $m\geq1$, the \emph{multiplier of $f$ at $z_0$} is
\[
\rho_f(z_0):=d(f^{\circ m})(z_0)\in k,
\]
where the differential is viewed as a scalar on the one-dimensional tangent space $T_{z_0}\Pone_k$. It is invariant under conjugacy by $\PGL_2(k)$.

Let $\Rat_d$ be the open subscheme of $\mathbb{P}^{2d+1}_{\Z}$ parametrizing pairs of degree-$d$ homogeneous forms with nonzero resultant. The group scheme $\mathrm{SL}_2$ acts on $\Rat_d$ by conjugacy. Silverman proved that the geometric quotient
\[
\sM_d:=\Rat_d/\mathrm{SL}_2
\]
exists as an affine integral scheme over $\Z$; see \cite{Sil98}. We denote its base change to a field $K$ by $\sM_{d,K}$. If $K$ is algebraically closed, then $\sM_{d,K}$ is the coarse moduli space of degree-$d$ rational maps over $K$ and is an irreducible affine variety of dimension $2d-2$; see \cite[Theorem~4.36(c)]{Sil07} and \cite[Chapter~2]{Sil12}. We write $[f]\in\sM_{d,k}(k)$ for the conjugacy class of $f$.

For a rational map $h:\Pone_k\to\Pone_k$, let $\Fix(h)$ denote its fixed-point scheme, namely the scheme-theoretic intersection of the graph of $h$ with the diagonal in $\Pone_k\times\Pone_k$. If $h$ has degree $e\geq2$, then the zero-dimensional scheme $\Fix(h)$ has length $e+1$. We also view $\Fix(h)$ as a multiset of cardinality $e+1$, with elements in $\Pone(k)$. The phrase ``simple fixed point'' means multiplicity one in $\Fix(f)$, equivalently multiplier different from $1$.

For $n\geq1$, let $\mathrm{Per}_n(f)$ be the multiset of fixed points of $f^{\circ n}$, with multiplicities in $\Fix(f^{\circ n})$, and set
\[
N_{d,n}:=d^n+1.
\]
Let
\[
S_n(f)=\left(\sigma_{1,n}(f),\ldots,\sigma_{N_{d,n},n}(f)\right),
\]
where $\sigma_{j,n}(f)$ is the $j$th elementary symmetric function of the $N_{d,n}$ multipliers
\[
\left\lbrace\rho_{f^{\circ n}}(z)\colon z\in\mathrm{Per}_n(f)\right\rbrace.
\]
These symmetric functions are regular conjugacy invariants and define a morphism over $\Z$; see \cite[\S~4]{Sil98} and \cite[\S~4.5]{Sil07}. For a field $K$, we denote its base change by
\[
S_{n,K}:\sM_{d,K}\to\A_K^{N_{d,n}};
\]
see \cite[\S~1.1]{JX25} and \cite[\S~1.1]{Zha26}. Equivalently, the data of $S_n(f)$ is encoded by
\begin{equation}\label{eq:mulpoly}
\Phi_{n,f}(T):=\prod_{z\in\Fix(f^{\circ n})}\left(T-\rho_{f^{\circ n}}(z)\right).
\end{equation}
For $n\geq1$ and a field $K$, the \emph{multiplier spectrum morphism up to level $n$} is
\[
\tau_{d,n,K}:=(S_{1,K},\ldots,S_{n,K}):\sM_{d,K}\to \A_K^{N_{d,1}}\times\cdots\times\A_K^{N_{d,n}}.
\]
When the base field is clear, we often omit the subscript $K$.

For $x\in\sM_{d,k}(k)$, the morphism $\tau_{d,n,k}$ is called \emph{injective at $x$} if
\[
\tau_{d,n,k}^{-1}\left(\tau_{d,n,k}(x)\right)=\{x\}.
\]
It is injective on a subset if it is injective at every point of that subset, and it is \emph{generically injective} if it is injective on a nonempty Zariski open subset of $\sM_{d,k}$. This definition concerns the full fiber in the moduli space, not only the restriction of the morphism to the chosen open subset.

Over $\C$, McMullen proved quasi-finiteness of the full multiplier spectrum away from the flexible Latt\`es locus \cite{McM87}. For the case $k=\C$, Ji and Xie proved generic injectivity of the full multiplier spectrum \cite[Theorem~1.3]{JX25}, and also conjectured that $\tau_{d,2,\C}$ is already generically injective \cite[Conjecture~1.8]{JX25}. The cases $d=2$ and $d=3$ of the conjecture of Ji--Xie follow from results of Milnor \cite{Mil93} and Gotou \cite[Theorem~1.2]{Got23}, respectively. Huguin proved the polynomial analogue in a stronger form: for $k=\C$, the multiplier morphism up to level two on the polynomial moduli space is finite and birational onto the closure of its image \cite[Main Theorem]{Hug24}.

Our main result is the positive resolution of the conjecture of Ji and Xie, in all characteristics different from $2$:
\begin{theorem}\label{thm:main}
Let $d\geq2$ be an integer, and let $F$ be a field of characteristic different from $2$. Then the induced dominant morphism
\[
\tau_{d,2,F}:\sM_{d,F}\to\overline{\tau_{d,2,F}(\sM_{d,F})}\subseteq\A_F^{d+1}\times\A_F^{d^2+1}
\]
is birational. Consequently, if $F$ is algebraically closed, then $\tau_{d,2,F}$ is generically injective.
\end{theorem}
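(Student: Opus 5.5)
Our approach is to reduce birationality to the statement that the function field of $\sM_{d,F}$ is generated over $F$ by the coordinate functions of $\tau_{d,2,F}$. Since $\sM_d$ is integral over $\Z$ and the multiplier symmetric functions are defined over $\Z$, it suffices to work over the prime field and then over an algebraic closure $k$ of characteristic $\neq 2$. Dominance onto the closure of the image is automatic. Birationality then amounts to two facts: $\tau_{d,2,k}$ is generically finite (equidimensional image), and it has degree one. We would establish the degree-one statement by exhibiting a nonempty open set on which a generic fiber is a single point, together with separability. Generic injectivity over an algebraically closed $F$ follows from birationality: the map is an isomorphism over a dense open $V$ of the image, and we take $U=\tau^{-1}(V)$, after shrinking so that no other component of the fiber maps into $V$. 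That last step needs care, but it is standard once quasi-finiteness is known on an open set.

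\textbf{Normal form and field generation.} We would parametrize a generic $f$ by its fixed-point data. Conjugate so that $\infty$ is a simple fixed point and write
\[
f(z)=z+\frac{P(z)}{Q(z)},
\]
with the remaining $d$ fixed points $a_1,\dots,a_d$ being the roots of $P$. The fixed-point indices $1/(1-\rho_i)$ satisfy the holomorphic index formula, so the multipliers $\rho_i$ at the $a_i$, together with the $a_i$, determine $f$ (fixed-index normal form). $S_1$ gives the multipliers only as an unordered set, so we need invariants that label the fixed points and recover the affine configuration $\{a_i\}$ modulo affine maps.

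\textbf{Pair invariants from two-cycles.} The key input would come from the two-cycles. Let $f$ degenerate non-archimedeanly, over a field like $k((t))$, so that the fixed points $a_i$ become far apart relative to some scale. In that limit the $d(d-1)/2$ two-cycles of $f$ should localize, each pair $\{a_i,a_j\}$ carrying roughly one two-cycle whose multiplier is asymptotically an explicit function of $\rho_i,\rho_j$ and of the cross-ratio-type quantity of $a_i,a_j$ relative to the normalization. Thus from $S_2$ we read off the leading terms of these pair invariants $g(\rho_i,\rho_j,a_i-a_j)$. We would then show, by specialization, that the field generated by symmetric functions of the labeled pair data equals the field of the configuration: the fixed-point configuration is birationally reconstructed from its pair invariants, in the spirit of reconstructing a point set from unordered pairwise distances, which is generically possible. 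The factor $1/2$ in the two-cycle computations, and the separability of the relevant maps, are where characteristic $2$ is excluded.

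\textbf{Main obstacle.} The hard step is making the degeneration argument rigorous. One must compute the asymptotic two-cycle multipliers precisely enough to show that the image of the generic point under $\tau_{d,2}$ separates the labeled data, and then pass from the degenerate fiber to generic degree one. I would try an upper-semicontinuity argument for the degree of $\tau$ restricted to a suitable curve through the degenerate locus: show the fiber over the special (Berkovich-type) limit is reduced and a single point, then conclude that the degree of $\tau_{d,2}$ onto its image is one. The combinatorial reconstruction of a generic configuration from its unordered pair invariants also requires a genericity argument, for instance checking that generic configurations have no nontrivial permutation preserving all pair invariants.
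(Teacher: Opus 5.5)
Your outline has the right architecture: a fixed-index normal form, a $t$-adic degeneration in which two-cycles localize at pairs of fixed points, birational reconstruction from pair invariants, and descent. However, the two steps that carry the proof are either mis-specified or missing.

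\textbf{The pair invariants are mis-specified.} In the degeneration that works, the fixed points stay at unit mutual distance while the indices tend to zero: $c_i=tw_i$ with $w_i$ generic units. The $(i,j)$-cycle then hops between the discs $x_i+t\sO_K$ and $x_j+t\sO_K$. Its multiplier is $\rho_i\rho_j-2B_{ij}(x)/(tw_iw_j)+O(1)$, where $\rho_i=1-1/(tw_i)$.
\begin{itemize}
\item The leading asymptotics carries no configuration data at all. It only labels the pair, and since the residues $1/(w_iw_j)$ are distinct, no permutation analysis is needed.
\item The configuration enters only at order $t^{-1}$, through $B_{ij}(x)=(x_i-x_j)(T_w(x)_i-T_w(x)_j)$ with $T_w(x)_i=\sum_{r\neq i}w_r/(x_i-x_r)$. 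This involves all fixed points and is not a function $g(\rho_i,\rho_j,a_i-a_j)$. Indeed, no nonconstant function of $a_i-a_j$ alone is conjugacy invariant.
\item So the reconstruction problem is not ``points on a line from pairwise distances''. It is recovering the points $p_i=(x_i,T_w(x)_i)$ of the split quadratic plane $(k^2,XY)$ from the values $XY(p_i-p_j)$. These values determine the points only up to $(u,v)\mapsto(au+b,a^{-1}v+c)$ or the swap $(u,v)\mapsto(a^{-1}v+c,au+b)$.
\item You must rule out the swap, which would force $T_w(T_w(x))$ to be a translate of $x$, by an explicit nonvanishing computation, and you must check separability.
\end{itemize}
The factor $2$ in front of $B_{ij}$ is exactly where characteristic $2$ breaks the argument.

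\textbf{Nothing in your plan controls the boundary, and this is the more serious gap.} The map $\tau_{d,2}$ does not extend to the degenerate limit, since the multipliers have poles there. Even for the rescaled invariants, a one-point reduced limiting fiber does not bound the generic degree, because the relevant model of $\sC_d$ over the $t$-adic base is not proper. A second configuration $y$, defined over a finite extension of $k(w,x)((t))$ and having the same two-cycle spectrum, could have colliding reductions. That is, it could specialize into the boundary of $\sC_d$, where the limit map $[x]\mapsto(B_{ij}(x))_{i<j}$ says nothing. Your semicontinuity step cannot exclude this.

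The missing idea is a no-escape argument:
\begin{enumerate}
\item Normalize $y$ to be integral, and take a cluster $C$ at the maximal valuation of a pairwise difference.
\item Rescale so that the differences inside $C$ are units. The leading-term labeling still applies to pairs inside $C$, and contributions from indices outside $C$ reduce to $0$.
\item Comparing $t^{-1}$ coefficients gives $\overline{B_{ij}(x)}=B^C_{ij}(\bar\xi)$ for $i,j\in C$.
\item Combine this with the identity $\sum_{i<j,\ i,j\in C}w_iw_jB^C_{ij}=W_Ce_2(w_C)$. The result is a nontrivial algebraic relation among $w$ and $x$, which the generic reference configuration violates because $C$ is proper.
\end{enumerate}
Only after this can you conclude $[\bar y]=[x]$ from birationality of the limit map. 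Then $y=x$ follows by Hensel's lemma with the nonvanishing Jacobian of the rescaled invariants; this replaces your semicontinuity step.
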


\begin{remark}\label{rem:char2intro}
The restriction on the characteristic is specific to the pair-invariant reconstruction used in the proof: the coefficient containing $2B_{ij}$ in the two-cycle expansion vanishes in characteristic $2$ (see Proposition~\ref{prop:exp}). In fact, the framework of the main theorem's proof can be modified in characteristic $2$ by carrying the expansion one term further and working with the squared fixed-point coordinates. We give a sketch of the characteristic-$2$ variant in \S~\ref{subsec:char2}.
\end{remark}

\begin{remark}
The methods of this paper and those of \cite{JX25} are rather different. The proof in \cite{JX25} is more conceptual. Assuming that generic injectivity fails, Ji and Xie construct pairs of algebraic families with the same multiplier spectrum and use a dynamical Andr\'e--Oort type result for two families of rational maps, together with the theory of intertwined maps, to obtain a contradiction. By contrast, our approach is more elementary and computational. We use the fixed-point index relation to obtain an explicit normal form, study degenerations of two-cycle multipliers, and reconstruct the generic fixed-point configuration from the resulting pair invariants. We also note that the argument of \cite{JX25} gives some information on the non-injective locus of the full multiplier spectrum morphism: the one-dimensional families of rational maps with the same multiplier spectrum they constructed are shown to be generically intertwined.
\end{remark}

\begin{remark}
Our methods for rational maps can also be adapted to show the generic injectivity part of Huguin's polynomial theorem \cite{Hug24} (over every algebraically closed field of characteristic different from $2$), although they do not give the properness or finiteness result. Indeed, for a polynomial the fixed point at $\infty$ has multiplier $0$ and fixed-point index $1$, and, writing $Q_x(z)=\prod_{i=1}^d(z-x_i)$, the polynomial locus in the fixed-index parameter space is characterized by the condition that $c_iQ_x'(x_i)$ is independent of $i$. Under the specialization $c_i=tw_i$, this gives $w_i=1/Q_x'(x_i)$ up to a common scalar, and the family used in the proof becomes $f_{x,t}(z)=z-Q_x(z)/t$. For $d=2,3$, generic reconstruction on the polynomial locus already follows from the fixed-point multipliers. For $d\geq4$, the degeneration argument is applied on the polynomial locus, with its nonvanishing conditions checked subject to the relations $w_iQ_x'(x_i)=w_jQ_x'(x_j)$. Thus, the same degeneration and reconstruction argument shows that the multiplier morphism up to level two on the polynomial moduli space is generically injective in characteristic different from $2$.
\end{remark}

\subsection{Idea of the proof}\label{sec:mainidea}
Let $[f]\in\sM_{d,k}(k)$ lie in the Zariski open dense locus on which $\Fix(f^{\circ2})$ is reduced. Thus, every fixed point $z$ of $f^{\circ2}$ satisfies $\rho_{f^{\circ2}}(z)\neq1$, and every fixed point $z$ of $f$ satisfies $\rho_f(z)\neq\pm1$. We will further require the $d+1$ multipliers of the fixed points of $f$ to be pairwise distinct when we pass from unordered to ordered fixed-point data in the final step.

For a rational map $h$ of degree $\geq2$, the \emph{fixed-point index} of a fixed point $z_0$ is
\[
\iota_h(z_0):=\operatorname{Res}_{u=0}\frac{du}{u-\widetilde{h}(u)},
\]
where $u$ is any local coordinate with $u(z_0)=0$ and $\widetilde{h}$ is the local expression of $h$. This residue is independent of the chosen local coordinate. If the fixed point $z_0$ is simple, then
\[
\iota_h(z_0)=\frac{1}{1-\rho_h(z_0)}\in k^\times.
\]
The rational fixed-point index formula is
\[
\sum_{z\in |\Fix(h)|}\iota_h(z)=1,
\]
where $|\Fix(h)|$ denotes the set of distinct fixed points; see \cite[Exercise~1.17]{Sil07}.

Choose and mark one fixed point of $f$, conjugate it to $z_0=\infty$, and order the other fixed points as $z_1,\ldots,z_d\in k$. Write $c_i=\iota_f(z_i)$ for $0\leq i\leq d$.

An \emph{ordered affine $d$-configuration} is an element
\[
x=(x_1,\ldots,x_d)\in\Conf_d(\A^1):=\{(x_1,\ldots,x_d)\in\A^d\colon x_i\neq x_j\text{ for }i\neq j\}.
\]
Two $d$-configurations $x=(x_1,\ldots,x_d)$ and $y=(y_1,\ldots,y_d)$ are \emph{affinely equivalent} if there exists $\sigma\in\Aff$ such that $y_i=\sigma(x_i)$ for $1\leq i\leq d$, where
\[
\Aff=\mathrm{Aut}(\A^1)=\{z\mapsto az+b\colon a\neq0\}
\]
is the automorphism group of $\A^1$. The \emph{space of affine $d$-configurations} is the geometric quotient
\[
\sC_d:=\Conf_d(\A^1)/\Aff.
\]
For $x\in\Conf_d(\A^1)$, we use $[x]$ to denote its class in $\sC_d$. Clearly, the normalization $(x_1,x_2)=(0,1)$ identifies $\sC_d$ with a Zariski open dense subset of $\A^{d-2}$.

An \emph{ordered fixed-point index vector} is a tuple $c=(c_0,c_1,\ldots,c_d)\in(k^\times)^{d+1}$ satisfying $\sum_{i=0}^d c_i=1$, where $c_i$ is attached to the labeled fixed point $z_i$. Proposition~\ref{prop:nf} shows that, for such a $c$, an ordered affine $d$-configuration $x$, and the marked fixed point $z_0=\infty$, the corresponding rational map is
\[
f(z)=z-\frac{1}{R(z)},\quad R(z)=\sum_{i=1}^d\frac{c_i}{z-x_i}.
\]
A M\"obius transformation preserving the marked point at infinity is an affine transformation in $\Aff$. Therefore, for a fixed ordered index vector, conjugacy preserving all labels is exactly affine equivalence of $x$.

The key specialization is algebraic. Let $w_1,\ldots,w_d$ be algebraically independent over $k$, let $t$ be the
uniformizer of the complete Laurent-series field
\[
K_0:=k(w_1,\ldots,w_d)((t)),
\]
and specialize
\[
c_i=t w_i\quad(1\leq i\leq d),\quad c_0=1-t\sum_{i=1}^d w_i.
\]
Here every $w_i$ is a $t$-adic unit. Write
\[
f_{x,t}(z)=z-\left(t\sum_{i=1}^d\frac{w_i}{z-x_i}\right)^{-1}.
\]
For a $d$-configuration $x=(x_1,\ldots,x_d)$, define
\[
T_w(x)_i=\sum_{r\neq i}\frac{w_r}{x_i-x_r},\quad B_{ij}(x):=(x_i-x_j)\left(T_w(x)_i-T_w(x)_j\right).
\]
If all $x_i-x_j$ ($i\neq j$) are $t$-adic units, then for every unordered pair $(i,j)$ ($i<j$), there is a unique two-cycle with one point in the residue disc of $x_i$ and the other in the residue disc of $x_j$. If $p_{ij}$ is either point of this cycle, Proposition~\ref{prop:exp} gives
\[
\rho_{f_{x,t}}(p_{ij})=\frac{1}{t^2w_iw_j}-\frac{1}{t}\left(\frac{1}{w_i}+\frac{1}{w_j}+\frac{2B_{ij}(x)}{w_iw_j}\right)+O(1).
\]
Thus, the reduction of $t^2\rho_{f_{x,t}}(p_{ij})$ is $1/(w_iw_j)$. These elements are pairwise distinct because the $w_i$ are algebraically independent. By \emph{pair-labeling} we mean the unique labeling of the roots of the rescaled two-cycle multiplier polynomial by pairs $(i,j)$ ($i<j$) for which the root labeled $(i,j)$ ($i<j$) reduces to $1/(w_iw_j)$. Applying Lemma~\ref{lem:hensel} in one variable to the rescaled multiplier polynomial lifts each of these distinct residue roots uniquely. This makes the pair-labeling unique throughout the corresponding residue neighborhood. The coefficient of $t^{-1}$ then recovers every $B_{ij}(x)$.

For $u,v\in\A^d(k)$, define a symmetric matrix $M(u,v)\in\mathrm{Mat}_{d\times d}(k)$ by
\[
M(u,v)_{ij}=(u_i-u_j)(v_i-v_j),\quad 1\leq i,j\leq d.
\]
Proposition~\ref{prop:fac} classifies its generic factorizations. Applying that proposition to $u=x$ and $v=T_w(x)$ implies that for generic $w$, the morphism $[x]\mapsto(B_{ij}(x))_{i<j}$ is birational to the closure of its image.

A second issue is boundary degeneration. Suppose another $d$-configuration has the same two-cycle spectrum but some of its points collide after reduction. After making all coordinates integral, choose a cluster $C$ at the largest valuation of a pairwise difference. After rescaling by one difference within $C$, all internal differences then become units, and every coordinate outside $C$ has negative valuation. The pair-labeling still applies to internal pairs. Contributions from labels outside $C$ vanish after reduction, so the subleading multiplier terms force the internal pair invariants to equal the corresponding invariants of the reference configuration. Lemma~\ref{lem:cl} then gives an algebraic identity that a generic reference configuration does not satisfy. This non-archimedean cluster argument excludes all boundary branches. The multivariable Hensel's lemma (Lemma~\ref{lem:hensel}) and the nonvanishing Jacobian of the pair-invariant map then show that the ordered generic fiber has one reduced point. Finally, distinct fixed-point multipliers identify the ordering, and the factorization of the fixed points of $f^{\circ2}$ into fixed points of $f$ and two-cycles descends the ordered conclusion to $\sM_d$. Birationality over an arbitrary field $F$ of characteristic different from $2$ follows by descent from an algebraic closure of $F$.

\subsection{Organization}
Section~\ref{sec:deg} develops the fixed-index normal form, the multivariable Hensel's lemma, and the two-cycle expansion. Section~\ref{sec:pair} proves reconstruction from the pair invariants and the boundary no-escape statement. Section~\ref{sec:rec} proves reconstruction on the ordered cover and Theorem~\ref{thm:main}, and ends with a sketch of the characteristic-$2$ variant. Section~\ref{sec:app} gives applications, conjectures, and open questions.

\section{Fixed indices and the two-cycle degeneration}\label{sec:deg}
\subsection{The fixed-index normal form}
\begin{proposition}\label{prop:nf}
\textup{(1)} Let $f:\Pone_k\to\Pone_k$ have degree $d\geq2$ and $d+1$ distinct fixed points.
After marking one fixed point as $x_0$ and conjugating it to $\infty$, write the remaining
fixed points as $x_1,\ldots,x_d\in k$. Write $c_i=\iota_f(x_i)$ for $0\leq i\leq d$. Then
\begin{equation}\label{eq:nf}
	f(z)=z-\frac{1}{R(z)},\quad R(z)=\sum_{i=1}^d\frac{c_i}{z-x_i},
\end{equation}
and $c_0=1-\sum_{i=1}^d c_i$.

\textup{(2)} Conversely, let $x_1,\ldots,x_d\in k$ be pairwise distinct, and let $c=(c_0,c_1,\ldots,c_d)$ be an ordered fixed-point index vector. Then \eqref{eq:nf} defines a degree-$d$ rational map with fixed points $x_0=\infty,x_1,\ldots,x_d$ and $c_i=\iota_f(x_i)$ for $0\leq i\leq d$.
\end{proposition}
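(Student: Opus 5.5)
The plan is to study the rational function $g(z):=1/(z-f(z))$, which has poles exactly at the finite fixed points of $f$. Residue computation then identifies $g$ with $R$. For part (1), since $\infty$ is a fixed point of $f$, write $f=P/Q$ with $\deg P=d+1$ impossible. More precisely, with $f(\infty)=\infty$ we have $\deg P=d>\deg Q$ or $\deg P=d\ge\deg Q$. Then
\[
z-f(z)=\frac{zQ(z)-P(z)}{Q(z)}.
\]
The numerator has degree at most $d+1$, and its roots are the finite fixed points. Since $\infty$ is a simple fixed point, the fixed-point scheme has $d$ finite points counted with multiplicity. So $zQ-P$ has degree exactly $d$ (up to a nonzero constant) and equals $a\prod_{i=1}^d(z-x_i)$. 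One should check that $\deg Q=d-1$: simplicity of $\infty$ forces $\deg(zQ-P)=d$, and the count of length $d+1$ for $\Fix(f)$ gives this. Hence $g=Q/(a\prod(z-x_i))$ is a proper rational function, because $\deg Q\le d-1<d$, with simple poles at the distinct $x_i$. Partial fractions then give $g(z)=\sum_i r_i/(z-x_i)$.

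The key step is computing $r_i=\operatorname{Res}_{z=x_i}g\,dz$. Using the local coordinate $u=z-x_i$, the definition of the index gives
\[
\iota_f(x_i)=\operatorname{Res}_{u=0}\frac{du}{u-\tilde f(u)}=\operatorname{Res}_{z=x_i}\frac{dz}{z-f(z)}=r_i .
\]
So $r_i=c_i$, and $z-f(z)=1/R(z)$, which is \eqref{eq:nf}. The index formula $\sum_{i=0}^d c_i=1$, cited from \cite[Exercise~1.17]{Sil07}, gives $c_0=1-\sum_{i\ge1}c_i$. I expect the main care to lie in the degree bookkeeping, that is, in showing that $g$ has no polynomial part. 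This is where the distinctness of all $d+1$ fixed points, including $\infty$, enters.

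For part (2), write $R=N/D$ with $D=\prod(z-x_i)$ and $N=\sum_i c_i\prod_{r\ne i}(z-x_r)$. We have $\deg N\le d-1$, and the leading coefficient of $N$ is $\sum_{i\ge1}c_i=1-c_0\ne1$, though it may vanish. Also $N(x_i)=c_i\prod_{r\ne i}(x_i-x_r)\ne0$, so $\gcd(N,D)=1$. Then
\[
f=\frac{zN-D}{N},
\]
and $zN-D$ and $N$ are coprime because $\gcd(D,N)=1$. The numerator has degree $\le d$. Its $z^d$ coefficient is $(1-c_0)-1=-c_0\ne0$, since $c_0\in k^\times$. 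So $\deg f=d$. The finite fixed points are the roots of $zN-D-zN=-D$, namely the $x_i$, each simple. Since $\deg N\le d-1<d$, we get $f(\infty)=\infty$, so $\infty$ is also fixed. The index at $x_i$ equals $\operatorname{Res}_{x_i}R\,dz=c_i$, by the same residue computation as in part (1). The index at $\infty$ is then $1-\sum_{i\ge1}c_i=c_0$ by the index formula. A direct check via the multiplier at $\infty$ agrees: $f(z)\sim z/(1-c_0)$ near $\infty$, giving multiplier $1-c_0$ at $\infty$ and index $1/(1-(1-c_0))=1/c_0$… This requires care with convention, so I would simply rely on the global index formula rather than the direct computation.
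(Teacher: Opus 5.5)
Your proposal is correct and follows essentially the paper's argument: show that $1/(z-f(z))$ is a proper rational function with simple poles at the $x_i$ whose residues are the indices, then apply partial fractions and the index formula; in (2), write $f=(zN-D)/N$ with $N(x_i)=c_iD'(x_i)\neq0$ and $z^d$-coefficient $-c_0$ (the paper checks properness by a local analysis at $\infty$ and, in (2), invokes (1) instead of reading off the residues of $R$ directly, which are only cosmetic differences). Two side remarks are wrong but not load-bearing: the claim $\deg Q=d-1$ fails in general (for a polynomial $f$ the denominator is constant), although you only use $\deg Q\le d-1$, which follows from $f(\infty)=\infty$; and in your abandoned check at $\infty$ one actually has $f(z)=-\frac{c_0}{1-c_0}z+O(1)$ when $c_0\neq1$, so the multiplier at $\infty$ is $1-1/c_0$ and the index is $c_0$, consistent with the global formula.
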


\begin{proof}
\textup{(1)}. Clearly, every fixed point of $f$ is simple. Define
\[
R_f(z):=\frac{1}{z-f(z)}.
\]
Near $x_i$, one has
\[
z-f(z)=\left(1-\rho_f(x_i)\right)(z-x_i)+O((z-x_i)^2),
\]
so the residue of $R_f(z)\,dz$ at $x_i$ is $c_i$. These are the only finite poles of $R_f$. At a pole of $f$, the function $R_f$ has a zero, while at a finite point that is neither fixed nor a pole it is regular.

Since $\infty$ is a simple fixed point, either $f$ has local degree at least two at $\infty$, in which case $R_f(z)=O(z^{-2})$, or $f$ has local degree one there. In the latter case, $f(z)=az+O(1)$ with $a\neq1$, and $R_f(z)=O(z^{-1})$. Hence $R_f$ has no polynomial part. Its unique partial-fraction decomposition is the function $R$ in \eqref{eq:nf}. Solving $R_f=1/(z-f(z))$ gives the asserted formula. The fixed-point index formula \cite[Exercise~1.17]{Sil07} gives $c_0=1-\sum_{i=1}^d c_i$.

\smallskip

\textup{(2)}. For the converse, define
\[
Q(z):=\prod_{i=1}^d(z-x_i),\quad A(z):=\sum_{i=1}^d c_i\frac{Q(z)}{z-x_i}.
\]
Then
\[
f(z)=\frac{zA(z)-Q(z)}{A(z)}.
\]
Since $A(x_i)=c_iQ'(x_i)\neq0$, the numerator and denominator have no common zero. If $C=\sum_{i=1}^d c_i$, then the coefficient of $z^d$ in $zA-Q$ is $C-1=-c_0\neq0$. Hence $f$ has degree $d$. Its finite fixed-point equation is $Q(z)=0$, and $\infty$ is also fixed. Hence, the $d+1$ fixed points $x_0=\infty,x_1,\ldots,x_d$ of $f$ are distinct and therefore simple. Applying \textup{(1)} to $f$, we obtain that $c_i=\iota_f(x_i)$ for $0\leq i\leq d$.
\end{proof}

\subsection{A multivariable Hensel's lemma}
We use the following multivariable form of Hensel's lemma for restricted power series; see \cite[Ch.~III, \S4, no.~5]{BouCA}. We include the norm estimates needed below.
\begin{lemma}\label{lem:hensel}
Let $K$ be a complete field equipped with a nontrivial non-archimedean absolute value $|\cdot|$. We write $\sO_K$ for its valuation ring, $\fm_K$ for its maximal ideal, and $\kappa_K=\sO_K/\fm_K$ for its residue field. Let $r\geq1$, and let $F=(F_1,\ldots,F_r)$ be an $r$-tuple of strictly convergent power series in $\sO_K\langle X_1,\ldots,X_r\rangle$. Let $\overline{F}$ be its reduction in $\kappa_K[X_1,\ldots,X_r]$. If $\bar{u}\in\kappa_K^r$ satisfies
\[
\overline{F}(\bar{u})=0\quad\text{and}\quad\det J_{\overline{F}}(\bar{u})\neq0,
\]
then there is a unique $u\in\sO_K^r$ reducing to $\bar{u}$ such that $F(u)=0$. More generally, if $b\in\sO_K^r$, $\overline{F}(\bar{u})=\bar{b}$, and $\det J_F$ is a unit on the residue polydisc above $\bar{u}$, then $F(u)=b$ has a unique solution in that residue polydisc.
\end{lemma}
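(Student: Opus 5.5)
We prove Lemma~\ref{lem:hensel} by the standard Newton iteration in $\sO_K^r$, first for the homogeneous statement and then for the version with right-hand side $b$. Replacing $F$ by $F-b$ reduces the second statement to a zero-finding problem, so we can work with $F(u)=0$ throughout. The one difference between the two hypotheses is this: in the first version the Jacobian is invertible only at $\bar u$, while in the second it is a unit on the whole residue polydisc. The first version implies the second on the polydisc, because $\det J_F(u)$ reduces to $\det J_{\overline F}(\bar u)$ for every $u$ reducing to $\bar u$. Hence both versions amount to the same hypothesis: $\det J_F(u)\in\sO_K^\times$ for all $u\in\sO_K^r$ with reduction $\bar u$.

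\emph{Existence.} Choose any lift $u_0\in\sO_K^r$ of $\bar u$. Then $\|F(u_0)\|<1$, where $\|\cdot\|$ is the sup norm on $\sO_K^r$. Define
\[
u_{n+1}=u_n-J_F(u_n)^{-1}F(u_n).
\]
Since $\det J_F(u_n)$ is a unit, $J_F(u_n)^{-1}$ has entries in $\sO_K$ by the adjugate formula. Hence $\|u_{n+1}-u_n\|\le\|F(u_n)\|$, and every $u_n$ stays in the residue polydisc.

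The key estimate is the Taylor expansion for restricted power series. For $u,h\in\sO_K^r$,
\[
F(u+h)=F(u)+J_F(u)h+G(u,h),\qquad \|G(u,h)\|\le\|h\|^2.
\]
This holds because each $F_i(u+h)$, expanded in $h$, is again a strictly convergent series with $\sO_K$-coefficients, and its terms of degree at least $2$ in $h$ have norm at most $\|h\|^2$. Applying it at $u_n$ with $h=u_{n+1}-u_n$ gives $\|F(u_{n+1})\|\le\|F(u_n)\|^2$. Therefore $\|F(u_n)\|\le\|F(u_0)\|^{2^n}\to0$, the sequence $(u_n)$ is Cauchy, and by completeness it converges to some $u$ with $F(u)=0$. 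The point $u$ reduces to $\bar u$ because the residue polydisc is closed.

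\emph{Uniqueness.} Suppose $u$ and $u'=u+h$ are two solutions in the residue polydisc, so $\|h\|<1$. The Taylor expansion gives $0=J_F(u)h+G(u,h)$. Since $J_F(u)$ is invertible over $\sO_K$, we get $\|h\|=\|J_F(u)^{-1}G(u,h)\|\le\|h\|^2$. With $\|h\|<1$ this forces $h=0$.

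The only step needing real care is the bound $\|G(u,h)\|\le\|h\|^2$. Here strict convergence is needed so that the rearrangement of $F(u+h)$ into a series in $h$ converges and keeps integral coefficients. This follows from the binomial expansion with integer coefficients, so the obstacle is minor.
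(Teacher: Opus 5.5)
Your proof is correct and follows the paper's argument: Newton iteration from a lift of $\bar u$ for existence, the Taylor bound $\|E\|\le\|\delta\|^2$ combined with the integral inverse Jacobian for uniqueness, and the substitution $F\mapsto F-b$ for the general version. The only difference is that you derive the quadratic estimate $\|F(u_{n+1})\|\le\|F(u_n)\|^2$ directly from the Taylor expansion of restricted power series, whereas the paper cites Bourbaki for it.
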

\begin{proof}
Choose a lift $u_0$ of $\bar{u}$. Then $F(u_0)\in\fm_K^r$ and $J_F(u_0)\in\mathrm{GL}_r(\sO_K)$. Newton iteration
\[
u_{m+1}=u_m-J_F(u_m)^{-1}F(u_m)
\]
is defined in the residue polydisc. With the sup norm, by \cite[Ch.~III, \S4, no.~5]{BouCA}, we obtain
\[
\lVert F(u_{m+1})\rVert\leq\lVert F(u_m)\rVert^2,\quad\lVert u_{m+1}-u_m\rVert\leq\lVert F(u_m)\rVert.
\]
Thus, the sequence $(u_m)$ is Cauchy, hence converges; denote its limit by $u$. Then $u$ is a zero of $F$ that reduces to $\bar{u}$.

If $u,u'$ are two zeros in the same residue polydisc with $\delta:=u-u'\neq0$, then $0<\lVert\delta\rVert<1$. Taylor expansion at $u'$ yields
\[
0=J_F(u')\delta+E,\quad\lVert E\rVert\leq\lVert\delta\rVert^2.
\]
Since $J_F(u')^{-1}$ has integral entries, this implies $\lVert\delta\rVert\leq\lVert\delta\rVert^2$, a contradiction.

Replacing $F$ by $F-b$ implies the last conclusion.
\end{proof}

\subsection{The pair-cycle expansion}
\begin{proposition}\label{prop:exp}
Let $d\geq2$, and let $K$ be a complete non-archimedean field of residue characteristic different from $2$. Take
\[
t\in\fm_K\setminus\{0\},\quad w_1,\ldots,w_d\in\sO_K^\times,\quad x\in\Conf_d(\A^1)(K).
\]
Define
\[
S_x(z):=\sum_{r=1}^d\frac{w_r}{z-x_r},\quad f_{x,t}(z):=z-\frac{1}{tS_x(z)}.
\]
Fix $i\neq j$. Suppose that $x_i-x_j$ is a unit and that
\[
(x_i-x_r)^{-1},\ (x_j-x_r)^{-1}\in\sO_K\quad(r\neq i,j).
\]
Then $f_{x,t}$ has a unique two-cycle with one point in $x_i+t\sO_K$ and the other in $x_j+t\sO_K$. If $p_{ij}$ is either point of this cycle, set
\[
\Delta:=x_i-x_j,\quad s_\ell:=\sum_{r\neq\ell}\frac{w_r}{x_\ell-x_r}\quad(\ell=i,j).
\]
Then
\begin{equation}\label{eq:rho}
\rho_{f_{x,t}}(p_{ij})=\frac{1}{t^2w_iw_j}-\frac{1}{t}\left(\frac{1}{w_i}+\frac{1}{w_j}+\frac{2\Delta(s_i-s_j)}{w_iw_j}\right)+O(1).
\end{equation}
Here $O(1)$ denotes an element of $\sO_K$. On an affinoid family on which the stated unit conditions remain valid, the $O(1)$ term is analytic in the configuration variables and remains $O(1)$ after differentiation with respect to those variables.
\end{proposition}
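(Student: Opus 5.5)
The plan is to rescale the two residue discs so that the two-cycle equations become a system of restricted power series with invertible reduced Jacobian. Lemma~\ref{lem:hensel} then gives existence and uniqueness of the cycle, and the multiplier comes from a first-order expansion of $f_{x,t}'$ at each point.

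\emph{Step 1 (local form of $f_{x,t}$).} Write points of $x_i+t\sO_K$ as $z=x_i+tu$ with $u\in\sO_K$, and put
\[
\sigma_i(y):=\sum_{r\neq i}\frac{w_r}{x_i-x_r+y}.
\]
The hypotheses make $(x_i-x_r)^{-1}$ integral for all $r\neq i$; for $r=j$ this holds because $\Delta$ is a unit. So $\sigma_i(tu)$ is a strictly convergent series in $u$ with $\sigma_i(tu)=s_i+O(t)$. Then
\[
tS_x(z)=\frac{w_i}{u}+t\sigma_i(tu),\qquad f_{x,t}(z)=x_i+tu-\frac{u}{w_i+tu\,\sigma_i(tu)}.
\]
The same holds at $x_j$ with $z=x_j+tv$ and $\sigma_j$.

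\emph{Step 2 (existence and uniqueness).} A two-cycle $\{x_i+tu,\,x_j+tv\}$ is exactly a solution of
\[
F_1(u,v):=\frac{u}{w_i+tu\,\sigma_i(tu)}-\Delta-t(u-v)=0,\qquad F_2(u,v):=\frac{v}{w_j+tv\,\sigma_j(tv)}+\Delta-t(v-u)=0.
\]
Both $F_1,F_2\in\sO_K\langle u,v\rangle$. A point fixed by $f_{x,t}$ cannot satisfy both equations, since $x_i+tu\neq x_j+tv$ because $\Delta$ is a unit. The reduction of the system is
\[
\bar u/\bar w_i=\bar\Delta,\qquad \bar v/\bar w_j=-\bar\Delta .
\]
Its Jacobian is $\mathrm{diag}(1/\bar w_i,1/\bar w_j)$, which is invertible. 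The reduced system has the unique root $(\bar w_i\bar\Delta,-\bar w_j\bar\Delta)$. By Lemma~\ref{lem:hensel} there is exactly one solution $(u,v)\in\sO_K^2$, and it satisfies
\[
u=w_i\Delta+O(t),\qquad v=-w_j\Delta+O(t).
\]
The same lemma's uniqueness statement shows that any two-cycle with one point in each disc is this one.

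\emph{Step 3 (multiplier).} From $f=z-1/(tS)$ we get $f'=1+tS'/(tS)^2$. Substituting the local forms,
\[
f_{x,t}'(x_i+tu)=\frac{\bigl(-w_i/(tu^2)+t\sigma_i'(tu)\bigr)u^2}{(w_i+tu\sigma_i)^2}+1=-\frac{1}{tw_i}+1+\frac{2u\,\sigma_i(tu)}{w_i^2}+O(t).
\]
The key point is that the singular term $-1/(tw_i)$ does not depend on $u$. Hence the $O(t)$ error in $u$ only affects the $O(t)$ remainder, and
\[
f'(p_i)=-\frac{1}{tw_i}+1+\frac{2\Delta s_i}{w_i}+O(t),\qquad f'(p_j)=-\frac{1}{tw_j}+1-\frac{2\Delta s_j}{w_j}+O(t).
\]
Multiply these as $(-a/t+A+O(t))(-b/t+B+O(t))=ab/t^2-(aB+bA)/t+O(1)$. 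Here
\[
aB+bA=\frac{1}{w_i}+\frac{1}{w_j}+\frac{2\Delta(s_i-s_j)}{w_iw_j},
\]
which gives \eqref{eq:rho}. Since $\rho=f'(p_i)f'(p_j)$ is symmetric in the two points, the formula holds for either point of the cycle.

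\emph{Step 4 (families), the main obstacle.} The analyticity claim requires the Hensel root to vary analytically in the configuration variables. The plan is to treat the configuration variables as extra affinoid coordinates and run the Newton iteration of Lemma~\ref{lem:hensel} over the affinoid algebra. The Jacobian inverse has sup-norm $\le 1$ uniformly on the family, so the iteration converges uniformly and the limit $(u,v)$ is analytic with bounded derivatives. Expanding $f'$ then shows that every remainder is $t$ times an integral analytic function, and these stay integral after differentiation. The care needed is to keep every remainder written explicitly as ($t$ times an integral analytic function) rather than only as a pointwise bound.
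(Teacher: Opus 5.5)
Your proposal is correct and follows the paper's proof essentially step for step. It uses the same rescaled unknowns $x_i+tu$, $x_j+tv$, the same restricted power-series system with reduced root $(\bar w_i\bar\Delta,-\bar w_j\bar\Delta)$ and invertible diagonal reduced Jacobian, the same first-order expansion of $f_{x,t}'$ at the two points, and the same Newton/implicit-function argument over an affinoid for the family statement.

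One step needs to be spelled out: $u=w_i\Delta+O(t)$ and $v=-w_j\Delta+O(t)$. Lemma~\ref{lem:hensel} as stated only gives these congruences modulo $\fm_K$. That is not enough when $t$ is not a uniformizer, since an $\fm_K$-error in $u$ would leave an $\fm_K/t$ error in the $t^{-1}$ coefficient. The refinement does follow at once from your equations, because $F_1(u,v)=u/w_i-\Delta+t\cdot(\text{integral})$ for $(u,v)\in\sO_K^2$, and similarly for $F_2$. The paper obtains the same refinement via a telescoping-matrix argument.
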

\begin{proof}
We seek $A,D\in\sO_K$ such that
\[
p=x_i+tA,\quad q=x_j+tD,\quad f_{x,t}(p)=q,\quad f_{x,t}(q)=p.
\]
Set $\Delta=x_i-x_j$, and define
\[
H_i(A):=\sum_{r\neq i}\frac{w_r}{x_i+tA-x_r},\quad H_j(D):=\sum_{r\neq j}\frac{w_r}{x_j+tD-x_r}.
\]
The hypotheses on the inverse differences imply that
\[
H_i\in\sO_K\langle A\rangle,\quad H_j\in\sO_K\langle D\rangle.
\]
Note that
\[
S_x(p)=S_x(x_i+tA)=\frac{w_i}{tA}+H_i(A),
\]
and then
\[
\frac{1}{tS_x(p)}=\frac{1}{tS_x(x_i+tA)}=\frac{A}{w_i+tAH_i(A)}.
\]
The analogous identity holds at $q$. Thus, the equations $f_{x,t}(p)=q$ and $f_{x,t}(q)=p$ are equivalent to
\[
F_1(A,D)=0,\quad F_2(A,D)=0,
\]
where
\[
F_1(A,D):=\Delta+t(A-D)-\frac{A}{w_i+tAH_i(A)}
\]
and
\[
F_2(A,D):=-\Delta+t(D-A)-\frac{D}{w_j+tDH_j(D)}.
\]

We can apply Lemma~\ref{lem:hensel} with
\[
r=2,\quad F=(F_1,F_2),\quad b=(0,0),\quad\text{and}\quad\bar{u}=\left(\bar{w}_i\bar{\Delta},-\bar{w}_j\bar{\Delta}\right)\in\kappa_K^2.
\]
Indeed, reduction modulo $\fm_K$ gives
\[
\overline{F_1}(\bar{A},\bar{D})=\bar\Delta-\frac{\bar{A}}{\bar{w}_i}\quad\text{and}\quad\overline{F_2}(\bar{A},\bar{D})=-\bar{\Delta}-\frac{\bar{D}}{\bar{w}_j}.
\]
The unique solution of the reduced system is therefore
\[
\left(\bar{A},\bar{D}\right)=\left(\bar{w}_i\bar{\Delta},-\bar{w}_j\bar{\Delta}\right)=\bar{u}.
\]
It is direct to compute that $\det J_{\overline{F}}(\bar{u})=1/(\bar{w}_i\bar{w}_j)\neq0$. Consequently, Lemma~\ref{lem:hensel} gives a unique solution
\[
(A,D)\in\sO_K^2
\]
with the indicated reduction. Since $x_i-x_j$ is a unit, the residue discs of $x_i$ and $x_j$ are disjoint. Hence $p\neq q$, and the resulting orbit has exact period two.

We next refine the congruence obtained by Lemma~\ref{lem:hensel}. Set
\[
u=(A,D),\quad u_0=(w_i\Delta,-w_j\Delta).
\]
Since
\[
\frac{w_i\Delta}{w_i+t w_i\Delta H_i(w_i\Delta)}=\frac{\Delta}{1+t\Delta H_i(w_i\Delta)}\equiv\Delta\pmod{t\sO_K},
\]
and similarly at $x_j$, one has
\[
F(u_0)\in t\sO_K^2.
\]
Also $u\equiv u_0\pmod{\fm_K}$. For each coordinate of $F$, write $F_\ell(u)-F_\ell(u_0)$ as a telescoping sum in the two variables, which yields a matrix $M(u,u_0)\in\mathrm{Mat}_{2\times2}(\sO_K)$ such that
\[
F(u)-F(u_0)=M(u,u_0)(u-u_0).
\]
Since $u\equiv u_0\pmod{\fm_K}$, reduction gives
\[
\overline{M(u,u_0)}=J_{\overline F}(\bar u)\in\mathrm{GL}_2(\kappa_K),
\]
and then $M(u,u_0)\in\mathrm{GL}_2(\sO_K)$. Since $F(u)=0$, it follows that
\[
u-u_0=-M(u,u_0)^{-1}F(u_0)\in t\sO_K^2.
\]
Thus, 
\[
A=w_i\Delta+O(t)\quad\text{and}\quad D=-w_j\Delta+O(t).
\]
For $m\in\Z_{\geq0}$, $O(t^m)$ denotes an element of $t^m\sO_K$. Consequently,
\begin{equation}\label{eq:pq}
p=x_i+tw_i\Delta+O(t^2)\quad\text{and}\quad q=x_j-tw_j\Delta+O(t^2).
\end{equation}

For $z$ near $x_i$, writing $y=z-x_i$, we compute that
\[
S_x(x_i+y)=\frac{w_i}{y}+s_i+O(y),
\]
and hence
\[
\frac{1}{S_x(x_i+y)}=\frac{1}{w_i}y-\frac{s_i}{w_i^2}y^2+O(y^3).
\]
Therefore,
\[
\left(\frac{1}{S_x}\right)'(x_i+y)=\frac{1}{w_i}-\frac{2s_i}{w_i^2}y+O(y^2).
\]
Since
\[
f_{x,t}'(z)=1-\frac{1}{t}\left(\frac{1}{S_x}\right)'(z),
\]
substitution of
\[
p-x_i=tw_i\Delta+O(t^2)
\]
gives
\[
\begin{aligned}
f_{x,t}'(p)&=1-\frac{1}{tw_i}+\frac{2s_i}{tw_i^2}\left(tw_i\Delta+O(t^2)\right)+O(t)\\
&=-\frac{1}{tw_i}+1+\frac{2\Delta s_i}{w_i}+O(t).
\end{aligned}
\]
Likewise, we have
\[
f_{x,t}'(q)=-\frac{1}{tw_j}+1-\frac{2\Delta s_j}{w_j}+O(t).
\]
The product of $f_{x,t}'(p)$ and $f_{x,t}'(q)$ shows \eqref{eq:rho}. The valuation of $\rho_{f_{x,t}}(p)=\rho_{f_{x,t}}(q)$ is $-2v(t)<0$, so in particular this multiplier is not $1$. Thus, both $p$ and $q$ are simple fixed points of $f_{x,t}^{\circ2}$.

Finally, consider an affinoid family of configurations on which the hypotheses of the proposition hold uniformly. The functions $H_i,H_j$, and hence $F_1,F_2$, then belong to the corresponding affinoid Tate algebra in the configuration variables and $A,D$. The Jacobian with respect to $(A,D)$ remains a unit on the residue polydisc. The non-archimedean analytic implicit-function theorem therefore yields analytic functions $A$ and $D$ of the configuration variables. The above argument works in the affinoid algebra as well and shows
\[
A=w_i\Delta+t\alpha,\quad D=-w_j\Delta+t\delta
\]
with bounded analytic $\alpha,\delta$. Substitution into the convergent local expansions implies that the remainder in \eqref{eq:rho} is a bounded analytic function. Differentiating the implicit equations, one has
\[
\partial_a
\begin{pmatrix}A\\ D\end{pmatrix}
=-\left(J_{(A,D)}F\right)^{-1}\,\partial_a F
\]
for any local configuration coordinate $a$, and these derivatives are again bounded. It follows that the $O(1)$ term in \eqref{eq:rho} remains $O(1)$ after differentiation with respect to the configuration variables.
\end{proof}
The coefficient of $t^{-1}$ in \eqref{eq:rho} is essential: the first term $1/(t^2w_iw_j)$ determines only the pair label, whereas the next term contains $B_{ij}(x)$. This distinction is used in both the reconstruction and the boundary argument.

\section{Pair invariants and boundary control}\label{sec:pair}
\subsection{Factorization of difference products}
For $u,v\in\A^d(k)$, we use the symmetric matrix $M(u,v)$ defined in \S~\ref{sec:mainidea}, whose $(i,j)$-entry is $(u_i-u_j)(v_i-v_j)$.

\begin{proposition}\label{prop:fac}
Let $d\geq2$ be an integer. Then there is a Zariski dense open subset $U$ of $\A^d(k)\times\A^d(k)$ such that, for $(u,v)\in U$, if $(u',v')\in\A^d(k)\times\A^d(k)$ satisfies $M(u',v')=M(u,v)$, then
\[
(u',v')=(au+b,a^{-1}v+c)\quad\text{or}\quad(u',v')=(a^{-1}v+c,au+b)
\]
for some $(a,b,c)\in k^\times\times k^2$.
\end{proposition}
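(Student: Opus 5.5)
The plan is to reduce everything to triangles of indices and use ratios of differences, which kills the affine ambiguity. Write $D_{ij}=u_i-u_j$, $E_{ij}=v_i-v_j$ and $m_{ij}=D_{ij}E_{ij}=M(u,v)_{ij}$.

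\emph{Choice of $U$.} We take $U$ inside the open set where every $m_{ij}$ ($i\neq j$) is nonzero. Then any $(u',v')$ with $M(u',v')=M(u,v)$ has $D'_{ij}:=u'_i-u'_j\neq0$ and $E'_{ij}:=v'_i-v'_j\neq0$ for all $i\neq j$. For $d=2$ the statement is immediate: $u'$ is an affine image $au+b$ of $u$ with $a=D'_{12}/D_{12}$, and then $E'_{12}=m_{12}/D'_{12}=a^{-1}E_{12}$, so $v'=a^{-1}v+c$. Assume $d\geq3$ from now on.

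\emph{Triangle step.} For a triple $\{1,2,i\}$ put
\[
\alpha=D'_{12},\quad \lambda_i=D'_{1i}/D'_{12}.
\]
Then $E'_{12}=m_{12}/\alpha$ and $E'_{1i}=m_{1i}/(\lambda_i\alpha)$. The relation $D'_{2i}E'_{2i}=m_{2i}$, with $D'_{2i}=D'_{1i}-D'_{12}$ and $E'_{2i}=E'_{1i}-E'_{12}$, becomes
\[
(\lambda_i-1)\left(\frac{m_{1i}}{\lambda_i}-m_{12}\right)=m_{2i}.
\]
This is a quadratic equation in $\lambda_i$, and $\alpha$ has cancelled. Its two roots are the two obvious solutions
\[
\lambda_i^{u}=D_{1i}/D_{12}\quad\text{and}\quad\lambda_i^{v}=E_{1i}/E_{12},
\]
and by Vieta these are all of them. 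We shrink $U$ so that $\lambda_i^u\neq\lambda_i^v$ for every $i\geq3$; this is a nonvanishing polynomial condition. Hence for each $i\geq3$ we get $\lambda_i\in\{\lambda_i^u,\lambda_i^v\}$.

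\emph{Consistency of the choices.} Suppose all $\lambda_i=\lambda_i^u$. Then $u'$ is determined up to $\alpha$ and translation, so $u'=au+b$ with $a=\alpha/D_{12}$. Each $E'_{ij}=m_{ij}/D'_{ij}=a^{-1}E_{ij}$, hence $v'=a^{-1}v+c$. Symmetrically, if all $\lambda_i=\lambda_i^v$, we get the swapped form $(u',v')=(a^{-1}v+c,\,au+b)$. It remains to exclude mixed choices, where $\lambda_i=\lambda_i^u$ and $\lambda_j=\lambda_j^v$ for some $3\leq i<j$.

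\emph{Mixed choices (main obstacle).} Here the triangle $\{1,i,j\}$ imposes the extra relation
\[
(\lambda_j-\lambda_i)\left(\frac{m_{1j}}{\lambda_j}-\frac{m_{1i}}{\lambda_i}\right)=m_{ij}.
\]
Substituting $\lambda_i=\lambda_i^u$ and $\lambda_j=\lambda_j^v$ and clearing denominators gives a polynomial $P_{ij}(u,v)$ that must vanish. The key step is to show $P_{ij}\not\equiv0$, and then remove $\{P_{ij}=0\}$ from $U$ for all such pairs (in both orders). Only the four coordinates with indices $1,2,i,j$ are involved, so this reduces to a $d=4$ computation. I would first test a specialization such as $u=(0,1,2,3)$ restricted to these indices, with $v=(0,1,s,t)$ for suitable values, and check that $P_{ij}\neq0$ there. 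If the algebra is messy, an alternative is to show that the solution set of the $d=4$ system is finite of size exactly $2$ modulo scaling, by counting solutions of the three quadratics and comparing with the two known solutions.

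Once every $P_{ij}\not\equiv0$ and the discriminant conditions hold, $U$ is a nonempty Zariski open, hence dense, subset of $\A^d\times\A^d$, and the consistency step gives the conclusion for every $(u,v)\in U$.
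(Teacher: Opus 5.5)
Your approach is correct and differs from the paper's. The step you flagged as the main obstacle closes at once, and $U$ needs no further shrinking.

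\textbf{Closing the mixed case.} Translate so that $u_1=v_1=0$, and put $\Delta_\ell:=u_2v_\ell-u_\ell v_2$. This is the determinant with rows $(u_2-u_1,v_2-v_1)$ and $(u_\ell-u_1,v_\ell-v_1)$. Then $m_{12}=u_2v_2$ and $\lambda^u_\ell-\lambda^v_\ell=-\Delta_\ell/(u_2v_2)$.

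Take $\lambda_i=u_i/u_2$ and $\lambda_j=v_j/v_2$. Then $m_{1j}/\lambda_j=u_jv_2$ and $m_{1i}/\lambda_i=u_2v_i$. Multiplying your $\{1,i,j\}$ relation by $u_2v_2$ gives
\[
(u_2v_j-u_iv_2)(u_jv_2-u_2v_i)=u_2v_2(u_i-u_j)(v_i-v_j).
\]
Expanding, the left side minus the right side is exactly $-\Delta_i\Delta_j$. The opposite mixed choice produces the same two factors. So mixed choices are already ruled out by the conditions $\Delta_\ell\neq0$ you imposed. As a check, your test point $u=(0,1,2,3)$, $v=(0,1,s,t)$ gives $-(s-2)(t-3)$. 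With this identity inserted, your argument is a complete proof.

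\textbf{Comparison with the paper.} The paper treats $p_\ell=(u_\ell,v_\ell)$ as points of the hyperbolic plane $(k^2,q)$ with $q(x,y)=xy$, so that $M(u,v)_{ij}=q(p_i-p_j)$. The argument then runs as follows.
\begin{itemize}
\item Polarization turns equality of $M$ into equality of the Gram matrices of the vectors $p_\ell-p_1$.
\item The linear map sending $p_2-p_1,p_3-p_1$ to their primed versions is an isometry.
\item By nondegeneracy this isometry carries every $p_\ell-p_1$ to $p'_\ell-p'_1$.
\item The two families in the statement are exactly the two components $\{(ax,a^{-1}y)\}$ and $\{(a^{-1}y,ax)\}$ of $O(q)$.
\item For $d=2$, Witt's extension theorem supplies the isometry.
\end{itemize}

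This explains conceptually why there are exactly two branches. It also works on the larger open set $\{\Delta_3\neq0\}$ with no condition on the $m_{ij}$, and that single-condition form is what Proposition~\ref{prop:B} invokes ``by the proof''.

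Your elimination is more elementary in several ways. It avoids Witt's theorem. No step uses $2\neq0$, which suits the characteristic-$2$ sketch. Your triangle quadratic $m_{12}\lambda^2-(m_{12}+m_{1i}-m_{2i})\lambda+m_{1i}=0$ is precisely the equation \eqref{eq:sepquad} that the paper later uses for separability.

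The price is a smaller $U$: you need all $m_{ij}\neq0$ and all $\Delta_\ell\neq0$. This is harmless for the statement. In the application these extra conditions also hold generically, since there $m_{ij}=B_{ij}(x)$ and $\Delta_\ell=\Delta_{w,\ell}(x)$.
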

\begin{proof}
Set $V=k^2$ and $q(x,y)=xy$. The symmetric bilinear form associated with
$q$ is
\[
B_q\left((x,y),(x',y')\right):=q\left((x,y)+(x',y')\right)-q(x,y)-q(x',y')=xy'+x'y,
\]
which is nondegenerate. So $(V,q)$ is a regular quadratic space. For $1\leq i\leq d$, define $p_i=(u_i,v_i),\ p_i':=(u_i',v_i')\in V$. Then
\[
M(u,v)_{ij}=q(p_i-p_j)\quad\text{and}\quad M(u',v')_{ij}=q(p_i'-p_j')\quad(1\leq i,j\leq d).
\]
Then $M(u',v')=M(u,v)$ is equivalent to
\begin{equation}\label{eq:qdist}
q(p_i'-p_j')=q(p_i-p_j)\quad(1\leq i,j\leq d).
\end{equation}

\smallskip

Suppose first that $d\geq3$. Define
\[
U:=\left\{(u,v)\in\A^d(k)\times\A^d(k)\colon\det
\begin{pmatrix}
	u_2-u_1&v_2-v_1\\
	u_3-u_1&v_3-v_1
\end{pmatrix}
\neq0\right\},
\]
which is Zariski open dense in $\A^d(k)\times\A^d(k)$. Fix $(u,v)\in U$, and suppose that $(u',v')$ satisfies \eqref{eq:qdist}. For $1\leq i\leq d$, define $e_i=p_i-p_1,\ e_i'=p_i'-p_1'\in V$. By the parallelogram identity and \eqref{eq:qdist}, for $1\leq i,j\leq d$ we have
\begin{equation}\label{eq:gram}
B_q(e_i',e_j')=q(e_i')+q(e_j')-q(e_i'-e_j')=q(e_i)+q(e_j)-q(e_i-e_j)=B_q(e_i,e_j).
\end{equation}
The definition of $U$ shows that $e_2,e_3$ form a $k$-basis of $V$. Let $\sigma:V\to V$ be the $k$-linear map determined by $\sigma(e_2)=e_2'$ and $\sigma(e_3)=e_3'$. From \eqref{eq:gram}, we conclude that
\[
B_q(\sigma(x),\sigma(y))=B_q(x,y)\quad(x,y\in V).
\]
In particular, $\sigma$ is injective, and $\sigma\in O(q)(k)$ (i.e., $\sigma$ is an isometry of $(V,q)$). Then $e_2',e_3'$ also form a basis of $V$. For $i\geq4$ and $j\in\{2,3\}$, by \eqref{eq:gram} we obtain
\[
B_q\left(e_i'-\sigma(e_i),e_j'\right)=B_q(e_i',e_j')-B_q\left(\sigma(e_i),\sigma(e_j)\right)=B_q(e_i,e_j)-B_q(e_i,e_j)=0.
\]
Since $e_2',e_3'$ form a basis of $V$ and $B_q$ is nondegenerate, we see that $e_i'=\sigma(e_i)$ for $i\geq4$. Clearly, $e_i'=\sigma(e_i)$ also holds for $1\leq i\leq3$. Thus,
\begin{equation}\label{eq:affiso}
p_i'=p_1'+\sigma(p_i-p_1)\quad(1\leq i\leq d).
\end{equation}

\smallskip

Suppose now that $d=2$. Define
\[
U:=\left\{(u,v)\in\A^2(k)\times\A^2(k)\colon M(u,v)_{12}\neq0\right\}.
\]
Clearly, $U$ is Zariski open dense. Fix $(u,v)\in U$. Set $(e,e')=(p_2-p_1,p_2'-p_1')$. Equation~\eqref{eq:qdist} shows $q(e')=q(e)=M(u,v)_{12}\neq0$. Then $B_q(e,e)=2q(e)\neq0$, so the one-dimensional quadratic subspace $(W:=e\cdot k,q\vert_W)$ is regular. The $k$-linear map
\[
\sigma_0:W\to e'\cdot k,\quad\sigma_0(e)=e',
\]
is an isometry with respect to the restrictions of $q$. By Witt's extension theorem (cf.~\cite[p.~15]{Sch85}), $\sigma_0$ extends to an isometry $\sigma\in O(q)(k)$. Observe that \eqref{eq:affiso} also holds when $d=2$.

It remains to determine the elements of $O(q)(k)$. Write
\[
\sigma(x,y)=(\alpha x+\beta y,\gamma x+\delta y),
\]
where $\alpha,\beta,\gamma,\delta\in k$ satisfy $\alpha\delta-\beta\gamma\neq0$. From $\sigma\in O(q)(k)$, we obtain
\[
\alpha\gamma=\beta\delta=0\quad\text{and}\quad\alpha\delta+\beta\gamma=1;
\]
then either
\[
\beta=\gamma=0,\quad \alpha\delta=1,
\]
or
\[
\alpha=\delta=0,\quad \beta\gamma=1.
\]
Therefore, for some $a\in k^\times$, either $\sigma(x,y)=(ax,a^{-1}y)$ or $\sigma(x,y)=(a^{-1}y,ax)$. In the first case, write $p_1'-\sigma(p_1)=(b,c)$. Then \eqref{eq:affiso} shows $(u',v')=(au+b,a^{-1}v+c)$. In the second case, write $p_1'-\sigma(p_1)=(c,b)$. Then \eqref{eq:affiso} implies $(u',v')=(a^{-1}v+c,au+b)$.
\end{proof}

\subsection{Reconstruction from the pair invariants}
For $d\geq2$, the group $\Aff$ acts freely on $\Conf_d(\A^1)$. The normalization $(x_1,x_2)=(0,1)$ identifies $\sC_d=\Conf_d(\A^1)/\Aff$ with
\[
\left\lbrace(x_3,\ldots,x_d)\in\A^{d-2}\colon 0,1,x_3,\ldots,x_d\text{ are pairwise distinct}\right\rbrace.
\]
In particular, $\sC_d$ is irreducible of dimension $d-2$.

Let
\[
w=(w_1,\ldots,w_d)\in k^d\quad\text{and}\quad x=(x_1,\ldots,x_d)\in\Conf_d(\A^1)(k).
\]
Define
\begin{equation}\label{eq:B}
T_w(x)_i=\sum_{r\neq i}\frac{w_r}{x_i-x_r},\quad B_{ij}(x)=(x_i-x_j)\left(T_w(x)_i-T_w(x)_j\right)=M\left(x,T_w(x)\right)_{ij}.
\end{equation}
For $(a,b)\in k^\times\times k$, we have 
\[
T_w(ax+b)=a^{-1}T_w(x)\quad\text{and}\quad B_{ij}(ax+b)=B_{ij}(x).
\]

Here and below, a statement for ``generic weights'' $w$ means that it holds outside a proper Zariski closed subset of the weight space $\A^d(k)$. For a fixed weight $w\in\A^d(k)$, the functions in \eqref{eq:B} induce a morphism
\[
\beta_w:\sC_{d,k}\to\A^{\binom{d}{2}}_k,\quad[x]\mapsto(B_{ij}(x))_{i<j}.
\]
Define
\[
Y_w:=\overline{\beta_w(\sC_d)}\subseteq\A^{\binom{d}{2}}_k.
\]
Then the morphism $\beta_w:\sC_{d,k}\to Y_w$ is dominant.

\begin{proposition}\label{prop:B}
Let $d\geq2$ be an integer. For $w$ outside a proper Zariski closed subset of $\A^d(k)$, the morphism $\beta_w:\sC_{d,k}\to Y_w$ is birational, and its differential has rank $d-2$ at the generic point of $\sC_{d,k}$.
\end{proposition}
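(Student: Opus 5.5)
The plan is to reduce everything to Proposition~\ref{prop:fac}, applied with $u=x$ and $v=T_w(x)$, using the identity $B_{ij}(x)=M(x,T_w(x))_{ij}$ from \eqref{eq:B}. For $d=2,3$ the statement is almost formal. $\sC_2$ is a point, and for $d=3$ the space $\sC_3$ is a curve, so I would verify the claim there by a direct computation with the normalization $x=(0,1,x_3)$. I therefore concentrate on $d\geq4$, although the argument below is written so that it should also cover $d=3$.

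\emph{Step 1: the pair $(x,T_w(x))$ is generic.} I must show that for generic $w$ and generic $x$, the point $(x,T_w(x))$ lies in the open set $U$ of Proposition~\ref{prop:fac}, i.e.
\[
\det\begin{pmatrix}x_2-x_1&T_w(x)_2-T_w(x)_1\\ x_3-x_1&T_w(x)_3-T_w(x)_1\end{pmatrix}\neq0 .
\]
This is a nonzero rational function of $(w,x)$, and it suffices to exhibit one point where it is nonzero. I would take $x_1,x_2$ very close, say $x_2-x_1=\varepsilon$, with the other points at fixed positions. Then $T_w(x)_1\approx -w_2/\varepsilon$ and $T_w(x)_2\approx w_1/\varepsilon$, so the determinant has leading term $(x_3-x_1)(w_1+w_2)/\varepsilon$, which is nonzero when $w_1+w_2\neq0$.

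\emph{Step 2: the two branches.} Take $x$ generic and $x'\in\Conf_d(\A^1)$ with $B(x')=B(x)$. Proposition~\ref{prop:fac} gives one of two outcomes. In the first branch, $x'=ax+b$, so $[x']=[x]$. In the second branch, $x'=a^{-1}T_w(x)+c$ and $T_w(x')=ax+b$. Because $T_w(\lambda y+\mu)=\lambda^{-1}T_w(y)$, the second branch forces $T_w(T_w(x))$ to be affinely equivalent to $x$. This is also required to be compatible with the index labels.

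I must exclude this identity for generic $(w,x)$, and I expect it to be the main obstacle. The first thing I would try is the same cluster degeneration as in Step 1. Suppose only $x_1,x_2$ collide, at scale $\varepsilon$. Then $y=T_w(x)$ has $y_1,y_2$ at distance of order $1/\varepsilon$ while $y_3,\dots,y_d$ stay bounded. After rescaling by $\varepsilon$, the points $y_3,\dots,y_d$ therefore form a single cluster of $d-2\geq2$ points.

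Applying $T_w$ again, I expect a configuration whose cluster pattern, meaning the valuations of pairwise differences over $k((\varepsilon))$, differs from that of $x$. In $x$ exactly one pair collides. After the second application, the $d-2$ clustered points are pushed far apart, while the points $1$ and $2$ are not separated in the same way. Affine equivalence preserves ratios of valuations of differences, so the identity cannot hold along this degeneration. Hence it fails on a dense open set.

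If the valuation bookkeeping turns out to be ambiguous, for instance for $d=3$ or when leading terms cancel, I would fall back on an explicit specialization of $w$ and $x$ and evaluate the cross-ratios directly. Once the second branch is excluded, the generic fiber of $\beta_w$ is a single point set-theoretically.

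\emph{Step 3: separability and the rank of the differential.} In positive characteristic, generic injectivity alone does not give birationality, so I also need the differential to have rank $d-2$. I would prove this by running the proof of Proposition~\ref{prop:fac} over the dual numbers $k[\epsilon]/(\epsilon^2)$. Since $e_2,e_3$ form a basis of the free module $k[\epsilon]^2$ and the form $B_q$ remains perfect, the isometry argument and the classification of $O(q)$ go through verbatim, because $\alpha\gamma=\beta\delta=0$ with $\alpha\delta+\beta\gamma=1$ still splits into the two cases modulo units.

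Consequently, any first-order deformation $x+\epsilon\xi$ preserving all $B_{ij}$ is an infinitesimal affine motion; the second branch does not arise infinitesimally, since it is already excluded at $\epsilon=0$. Hence $d\beta_w$ is injective on $T_{[x]}\sC_d$ at a generic point, so it has rank $d-2=\dim\sC_d$. Thus $\beta_w$ is generically étale onto $Y_w$, and together with generic injectivity from Step 2 it is birational.
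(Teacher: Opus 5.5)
Your Steps 1 and 3 are fine. Running the proof of Proposition~\ref{prop:fac} over $k[\epsilon]/(\epsilon^2)$ works because the isometry reduces to the identity modulo $\epsilon$ and so is of diagonal type. That is a valid substitute for the paper's separability argument, which instead notes that each $x_i$ is a root of $B_{12}X^2-C_iX+B_{1i}$ with derivative $-\Delta_{w,i}(x)\neq0$.

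The gap is in Step 2, exactly in the case $d\geq4$ on which you concentrate: the cluster pattern does not change. Take $x=(0,\varepsilon,x_3,\ldots,x_d)$, $y=T_w(x)$, $z=T_w(y)$ and $W'=w_3+\cdots+w_d$. Then $y_1=-w_2/\varepsilon+O(1)$, $y_2=w_1/\varepsilon+O(1)$ and $y_r=O(1)$ for $r\geq3$. Consequently
\[
z_1=-\varepsilon\Bigl(\frac{w_2}{w_1+w_2}+\frac{W'}{w_2}\Bigr)+O(\varepsilon^2),\qquad
z_2=\varepsilon\Bigl(\frac{w_1}{w_1+w_2}+\frac{W'}{w_1}\Bigr)+O(\varepsilon^2),\qquad
z_r=\sum_{s\geq3,\ s\neq r}\frac{w_s}{y_r-y_s}+O(\varepsilon).
\]
So, for generic data, $v(z_1-z_2)=1$ and every other difference has valuation $0$. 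This is exactly the pattern of $x$. The map $T_w$ exchanges the tight cluster $\{1,2\}$ with its complement, so applying it twice restores the pattern. Hence no valuation count can exclude $T_w^2(x)$ being affinely equivalent to $x$. For $d=3$ your expectation is reversed: there all three differences of $z$ have order $\varepsilon$, so the pattern does change. Your fallback, ``evaluate the cross-ratios directly,'' is precisely the content that is missing.

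What you lost is information in the second branch. Since $T_w(\lambda y+\mu)=\lambda^{-1}T_w(y)$, from $x'=a^{-1}T_w(x)+c$ you get $T_w(x')=a\,T_w^2(x)$. Comparing with $T_w(x')=ax+b$ gives $T_w^2(x)=x+b/a$, which is a pure translation, not merely an affine image. The paper excludes this using
\[
G_w(x)=\bigl(T_w^2(x)_2-T_w^2(x)_1\bigr)-(x_2-x_1),
\]
shown to be nonzero by an explicit formula for $d=3$ and by the specialization $w_4=\cdots=w_d=0$ for $d\geq4$.

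Once the scale is pinned, your degeneration also works, but through leading coefficients rather than valuations:
\[
(z_2-z_1)-(x_2-x_1)=\varepsilon\,\frac{(w_1+w_2)(w_3+\cdots+w_d)}{w_1w_2}+O(\varepsilon^2).
\]
This is nonzero for generic $w$ once $d\geq3$. It agrees with the limit $z\to\infty$ of the paper's $d=3$ formula for $G_w$.
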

\begin{proof}
If $d=2$, then $\sC_2$ consists of a single point, and the conclusion is trivial. Suppose now that $d\geq3$.

Use the normalization $(x_1,x_2)=(0,1)$ and view $\sC_d\subseteq\A^{d-2}$. Define
\[
\Delta_w(x):=\det
\begin{pmatrix}
	x_2-x_1&T_w(x)_2-T_w(x)_1\\
	x_3-x_1&T_w(x)_3-T_w(x)_1
\end{pmatrix}.
\]
When $T_w(x)\in\Conf_d(\A^1)$, define
\[
T_w^2(x):=T_w\left(T_w(x)\right)\quad\text{and}\quad G_w(x):=\left(T_w^2(x)_2-T_w^2(x)_1\right)-(x_2-x_1).
\]

\smallskip

We will verify that, with $x_3,\ldots,x_d,w_1,\ldots,w_d$ regarded as independent variables, the rational functions $\Delta_w(x)$ and $G_w(x)$ are nonzero and that $T_w(x)$ is generically a configuration. The verification below is uniform in every characteristic different from $2$.

For $d=3$, write
\[
(x_1,x_2,x_3)=(0,1,z)\quad\text{and}\quad(w_1,w_2,w_3)=(a,b,c),
\]
where $a,b,c,z$ are independent variables. A direct calculation gives
\[
T_w(x)_1=-\frac{bz+c}{z},\quad
T_w(x)_2=\frac{az-a-c}{z-1},\quad
T_w(x)_3=\frac{(a+b)z-a}{z(z-1)}.
\]
Then
\[
\Delta_w(x)=-\frac{P}{z(z-1)}
\]
and
\[
G_w(x)=
\frac{c(z^2-z+1)\left((a+b)z-a\right)P}
{\left(bz^2+(a+c)z-a-c\right)\left(az^2-(2a+b+c)z+a\right)\left((a+b)z^2-(a+b)z-c\right)},
\]
where
\[
P=(a+b)z^3-(a+2b)z^2-(a+2c)z+a+c.
\]
Both $\Delta_w(x)$ and $G_w(x)$ are nonzero rational functions. Moreover, using the common denominator $z(z-1)$, the three pairwise differences among the coordinates of $T_w(x)$ have numerators
\[
-(a+b)z^2+(a+b)z+c,\quad
-bz^2-(a+c)z+a+c,\quad
az^2-(2a+b+c)z+a,
\]
which are nonzero polynomials. Since $k$ is infinite, we conclude that $T_w(x)$ is generically a configuration.

For $d\geq4$, specialize
\[
(w_1,w_2,w_3,w_4,\ldots,w_d)=(a,b,c,0,\ldots,0)
\]
and keep $(x_1,x_2,x_3)=(0,1,z)$. The first three coordinates of $T_w(x)$ and of $T_w^2(x)$ remain those in the preceding $d=3$ computation. For $r\geq4$,
\[
T_w(x)_r=\frac{a}{x_r}+\frac{b}{x_r-1}+\frac{c}{x_r-z},
\]
which is a nonconstant rational function of $x_r$. Taking $x_4,\ldots,x_d$ successively as independent variables therefore shows that the coordinates of $T_w(x)$ are generically pairwise distinct. Consequently, the same specialization argument shows that $\Delta_w(x)$ and $G_w(x)$ are nonzero rational functions for every $d\geq4$.

It follows that
\[
\Delta_w(x),\quad G_w(x),\quad\prod_{i<j}\left(T_w(x)_i-T_w(x)_j\right)
\]
are nonzero rational functions on the total $(w,[x])$-parameter space $\A^d\times\sC_d$. After clearing denominators, the condition that any one of these rational functions vanish identically as a function of $x_3,\ldots,x_d$ is a Zariski closed condition on $w$. The preceding computations show that the union of these closed conditions is proper. Hence there is a Zariski open dense subset $W^\circ\subseteq\A^d(k)$ such that, for every $w\in W^\circ$, there is a Zariski open dense subset $\Omega_w\subseteq\sC_d(k)$ on which
\[
\Delta_w(x)\neq0,\quad T_w(x)\in\Conf_d(\A^1)(k),\quad G_w(x)\neq0.
\]

\smallskip

Fix $w\in W^\circ$ and $[x]\in\Omega_w$. Suppose that $[x']\in\sC_d(k)$ satisfies $\beta_w([x'])=\beta_w([x])$. Then
\[
M\left(x',T_w(x')\right)=M\left(x,T_w(x)\right).
\]
Since $\Delta_w(x)\neq0$, by the proof of Proposition~\ref{prop:fac}, there exist $a\in k^\times$ and $b,c\in k$ such that
\[
(x',T_w(x'))=(ax+b,a^{-1}T_w(x)+c)\quad\text{or}\quad(x',T_w(x'))=(a^{-1}T_w(x)+c,ax+b).
\]
Suppose first that $(x',T_w(x'))=(a^{-1}T_w(x)+c,ax+b)$. We obtain
\[
T_w(x')=T_w\left(a^{-1}T_w(x)+c\right)=a\,T_w^2(x).
\]
Comparison with $T_w(x')=ax+b$ yields $T_w^2(x)=x+b/a$. Then
\[
T_w^2(x)_2-T_w^2(x)_1=x_2-x_1\quad\text{and}\quad G_w(x)=0,
\]
contradicting $[x]\in\Omega_w$. Thus, we must have $(x',T_w(x'))=(ax+b,a^{-1}T_w(x)+c)$. Note that $[x']=[ax+b]=[x]$ in $\sC_d(k)$. Thus,
\begin{equation}\label{eq:fibersing}
\beta_w^{-1}\left(\beta_w([x])\right)=\left\lbrace[x]\right\rbrace.
\end{equation}

By \eqref{eq:fibersing}, the morphism $\beta_w:\sC_{d,k}\to Y_w$ is generically finite. We next show that the induced function-field extension is separable. Continue to use the normalization $(x_1,x_2)=(0,1)$, and set
\[
t_i:=T_w(x)_i-T_w(x)_1.
\]
Then, for $i\geq3$,
\[
B_{12}=t_2,\quad B_{1i}=x_it_i,\quad B_{2i}=(1-x_i)(t_2-t_i).
\]
Hence, with
\[
C_i:=B_{12}+B_{1i}-B_{2i}=t_i+x_it_2,
\]
the element $x_i$ satisfies
\begin{equation}\label{eq:sepquad}
B_{12}X^2-C_iX+B_{1i}=0,
\end{equation}
where the coefficients are all in $k(Y_w)$. The derivative of the left-hand side of \eqref{eq:sepquad} at $X=x_i$ is
\[
2B_{12}x_i-C_i=x_it_2-t_i=-\Delta_{w,i}(x),
\]
where
\[
\Delta_{w,i}(x):=\det
\begin{pmatrix}
	x_2-x_1&t_2\\
	x_i-x_1&t_i
\end{pmatrix}.
\]
Note that $\Delta_{w,3}(x)=\Delta_w(x)$ is a nonzero rational function. For every $i\geq3$, the rational function $\Delta_{w,i}$ is nonzero by interchanging labels $3$ and $i$ in the calculation above. It follows that each $x_i$ is separable over $k(Y_w)$ at the generic point. Since $k(\sC_d)=k(x_3,\ldots,x_d)$, the finite extension $k(\sC_d)/k(Y_w)$ is separable. Combining separability with \eqref{eq:fibersing}, we obtain $[k(\sC_d):k(Y_w)]=1$. Thus, $\beta_w:\sC_{d,k}\to Y_w$ is birational.

Finally, a birational morphism between irreducible varieties restricts to an isomorphism between Zariski open dense subsets. Since $\dim\sC_d=d-2$, the differential of $\beta_w$ has rank $d-2$ at the generic point of $\sC_{d,k}$.
\end{proof}

\subsection{The cluster identity}
\begin{lemma}\label{lem:cl}
Let $d\geq2$ be an integer, and let $w_1,\ldots,w_d\in F$, where $F$ is a field. Suppose that $C\subseteq\{1,\ldots,d\}$ has cardinality $\geq2$. Let $(x_i)_{i\in C}$ be pairwise distinct elements of $F$. Define
\begin{align*}
s_i^C&=s_i^C(x)=\sum_{r\in C\setminus\{i\}}\frac{w_r}{x_i-x_r},\quad B_{ij}^C=B_{ij}^C(x)=(x_i-x_j)(s_i^C-s_j^C),\\
w_C&=(w_i)_{i\in C},\quad W_C=\sum_{i\in C}w_i,
\quad e_2(w_C)=\sum_{\substack{i<j\\i,j\in C}}w_iw_j.
\end{align*}

\textup{(1)} We have
\begin{equation}\label{eq:cl}
\sum_{\substack{i<j\\i,j\in C}}w_iw_jB_{ij}^C=W_Ce_2(w_C).
\end{equation}

\textup{(2)} Regard $w_1,\ldots,w_d$ and $x_1,\ldots,x_d$ as independent variables over a field. If $C$ is proper, then
\begin{equation}\label{eq:clg}
\sum_{\substack{i<j\\i,j\in C}}w_iw_jB_{ij}(x)-W_Ce_2(w_C)=-\sum_{r\notin C}w_r\sum_{\substack{i<j\\i,j\in C}}w_iw_j\frac{(x_i-x_j)^2}{(x_i-x_r)(x_j-x_r)}.
\end{equation}
For every proper $C$, the right-hand side of \eqref{eq:clg} is a nonzero rational function in $w_1,\ldots,w_d,x_1,\ldots,x_d$.
\end{lemma}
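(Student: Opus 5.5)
The plan is direct computation. For (1) I would symmetrize over ordered pairs and collapse the resulting triple sums using a three-point identity. For (2) I would isolate the contribution of the indices outside $C$ and then prove nonvanishing by specializing the weights.

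\emph{Part (1).} The summand $w_iw_j(x_i-x_j)(s_i^C-s_j^C)$ is symmetric in $i,j$. Hence the left side of \eqref{eq:cl} equals the sum over ordered pairs $i\neq j$ in $C$ of $w_iw_j(x_i-x_j)s_i^C$. Substituting $s_i^C=\sum_{r\in C\setminus\{i\}}w_r/(x_i-x_r)$ gives a sum over $i\in C$ and $j,r\in C\setminus\{i\}$ of
\[
w_iw_jw_r\,\frac{x_i-x_j}{x_i-x_r}.
\]
I split it into two kinds of terms.
\begin{itemize}
\item The terms with $j=r$ have ratio $1$. They contribute $\sum_{i\neq r}w_iw_r^2$ over ordered pairs in $C$.
\item The terms with $i,j,r$ pairwise distinct are grouped by the unordered triple $\{i,j,r\}$. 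Each triple contributes $w_iw_jw_r$ times $\sum_\pi (x_{\pi(1)}-x_{\pi(2)})/(x_{\pi(1)}-x_{\pi(3)})$, the sum over the six permutations $\pi$ of the triple.
\end{itemize}
The key sub-step is that this six-term sum equals the constant $3$. It is invariant under affine changes of the three points, so it suffices to check it at $(0,1,z)$. There the six terms pair off into three pairs, each summing to $1$: for example $1/z+(z-1)/z=1$. Therefore the left side equals $\sum_{i\neq r}w_iw_r^2+3e_3(w_C)$, which is $W_Ce_2(w_C)$ by the elementary symmetric function identity $e_1e_2=\sum_{i\neq r}w_i^2w_r+3e_3$.

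\emph{Part (2).} Write $s_i=T_w(x)_i=s_i^C+\sum_{r\notin C}w_r/(x_i-x_r)$. This gives
\[
B_{ij}(x)=B_{ij}^C+\sum_{r\notin C}w_r(x_i-x_j)\Big(\frac1{x_i-x_r}-\frac1{x_j-x_r}\Big).
\]
The bracket equals $(x_j-x_i)/\big((x_i-x_r)(x_j-x_r)\big)$, so each correction term is $-w_r(x_i-x_j)^2/\big((x_i-x_r)(x_j-x_r)\big)$. Multiplying by $w_iw_j$, summing over $i<j$ in $C$, and applying (1) to the $B^C_{ij}$ part yields \eqref{eq:clg}.

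\emph{Nonvanishing.} It suffices to find one specialization of the weights at which the right side of \eqref{eq:clg} is a nonzero function of $x$. Since $C$ is proper, choose $r_0\notin C$ and two indices $i_0<j_0$ in $C$. Set $w_{i_0}=w_{j_0}=w_{r_0}=1$ and all other weights to $0$. Only the single term
\[
-\frac{(x_{i_0}-x_{j_0})^2}{(x_{i_0}-x_{r_0})(x_{j_0}-x_{r_0})}
\]
survives, and it is visibly a nonzero rational function. So the general expression is nonzero.

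I expect the only real obstacle to be the three-point cyclic identity in (1), which is settled by the affine normalization above. Everything else is bookkeeping.
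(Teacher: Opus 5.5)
Your proof is correct. Part (2) is essentially the paper's argument. The decomposition $T_w(x)_i=s_i^C+\sum_{r\notin C}w_r/(x_i-x_r)$ is the same. Your specialization $w_{i_0}=w_{j_0}=w_{r_0}=1$ with all other weights $0$ is equivalent to the paper's observation that the monomial $w_rw_iw_j$ occurs only in the summand indexed by $(r,i,j)$.

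For part (1) you take a genuinely different route. The paper never opens a triple sum. It applies the weighted covariance identity
\[
\sum_{i<j}w_iw_j(a_i-a_j)(b_i-b_j)=W_C\sum_{i}w_ia_ib_i-\Bigl(\sum_{i}w_ia_i\Bigr)\Bigl(\sum_{i}w_ib_i\Bigr)
\]
with $a=x$ and $b=s^C$. It combines this with two pairwise-cancellation identities, $\sum_{i\in C}w_is_i^C=0$ and $\sum_{i\in C}w_ix_is_i^C=e_2(w_C)$. Then $W_Ce_2(w_C)$ appears in one line, and the term $\sum_i w_ix_i$ drops out because $s^C$ has weighted mean zero. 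That structural form is what adapts directly to the characteristic-$2$ analogue with $V_w$.

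You instead reduce to the ordered sum $\sum_{i\neq j}w_iw_j(x_i-x_j)s_i^C$, expand into ordered triples, and use a three-point identity together with $e_1e_2=\sum_{i\neq r}w_i^2w_r+3e_3$. This costs more bookkeeping. In exchange it shows exactly which monomials of $W_Ce_2(w_C)$ come from the diagonal terms $j=r$ and which come from genuine triples. Both arguments are characteristic-free.

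Two small remarks.
\begin{itemize}
\item The three-point identity needs no affine normalization. Pairing $(i,j,r)$ with $(r,j,i)$ gives $\frac{x_i-x_j}{x_i-x_r}+\frac{x_r-x_j}{x_r-x_i}=1$ directly.
\item The lemma is stated over an arbitrary field $F$, so justify the passage to the ordered sum without halving. Split $s_i^C-s_j^C$ and relabel $(i,j)\to(j,i)$ in the second half. Arguing via ``symmetric summand, hence half the ordered sum'' divides by $2$ and fails in characteristic $2$.
\end{itemize}
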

\begin{proof}
\textup{(1)}. Note that
\begin{equation}\label{eq:sum-s}
\sum_{i\in C}w_i s_i^C=\sum_{\substack{i,r\in C\\ i< r}}\left(\frac{w_iw_r}{x_i-x_r}+\frac{w_rw_i}{x_r-x_i}\right)=0.
\end{equation}
Similarly,
\begin{equation}\label{eq:sum-xs}
\sum_{i\in C}w_i x_i s_i^C=\sum_{\substack{i<r\\ i,r\in C}}w_iw_r\left(\frac{x_i}{x_i-x_r}+\frac{x_r}{x_r-x_i}\right)=\sum_{\substack{i<r\\ i,r\in C}}w_iw_r=e_2(w_C).
\end{equation}

For arbitrary families $(a_i)_{i\in C}$ and $(b_i)_{i\in C}$, direct
expansion gives
\begin{equation}\label{eq:weighted-difference}
\sum_{\substack{i<j\\ i,j\in C}}w_iw_j(a_i-a_j)(b_i-b_j)=W_C\sum_{i\in C}w_i a_i b_i-\left(\sum_{i\in C}w_i a_i\right)\left(\sum_{i\in C}w_i b_i\right).
\end{equation}
Applying \eqref{eq:weighted-difference} with $a_i=x_i$ and $b_i=s_i^C$, and then using \eqref{eq:sum-s} and \eqref{eq:sum-xs}, we obtain
\[
\sum_{\substack{i<j\\ i,j\in C}}w_iw_jB_{ij}^C=W_C\sum_{i\in C}w_i x_i s_i^C-\left(\sum_{i\in C}w_i x_i\right)\left(\sum_{i\in C}w_i s_i^C\right)=W_Ce_2(w_C).
\]

\smallskip

\textup{(2)}. Write
\[
T_w(x)_i=s_i^C+r_i\quad\text{and}\quad r_i=\sum_{r\notin C}\frac{w_r}{x_i-x_r}.
\]
Then, for $i,j\in C$,
\[
B_{ij}(x)=B_{ij}^C+(x_i-x_j)(r_i-r_j).
\]
For each $r\notin C$, we have
\[
(x_i-x_j)\left(\frac{1}{x_i-x_r}-\frac{1}{x_j-x_r}\right)=-\frac{(x_i-x_j)^2}{(x_i-x_r)(x_j-x_r)}.
\]
Consequently,
\[
B_{ij}(x)-B_{ij}^C=-\sum_{r\notin C}w_r\frac{(x_i-x_j)^2}{(x_i-x_r)(x_j-x_r)}.
\]
Then \eqref{eq:clg} follows by multiplying by $w_iw_j$, summing over $i<j$ with $i,j\in C$, and \textup{(1)}.

Finally, suppose that $C\subsetneq\{1,\ldots,d\}$ with $\# C\geq2$. Fix $r\notin C$ and distinct $i,j\in C$. In the right-hand side of \eqref{eq:clg}, the monomial $w_rw_iw_j$ occurs only in the summand indexed by $(r,i,j)$, and its coefficient is
\[
-\frac{(x_i-x_j)^2}{(x_i-x_r)(x_j-x_r)}\neq0.
\]
Therefore, the right-hand side of \eqref{eq:clg} is a nonzero rational function.
\end{proof}

\subsection{No escape to the boundary}
Let $d\geq2$ be an integer. Set $(x_1,x_2)=(0,1)$. Suppose that $w_1,\ldots,w_d,x_3,\ldots,x_d$ are algebraically independent over $k$. When $d=2$, only consider $w_1,\ldots,w_d$. Let
\[
L=k(w_1,\ldots,w_d,x_3,\ldots,x_d)((t))
\]
be equipped with the $t$-adic valuation $v$, normalized by $v(t)=1$. We use the same normalization on finite valued extensions of $L$.

\begin{proposition}\label{prop:noescape}
With the notation above, define
\[
S_x(z)=\sum_{i=1}^d\frac{w_i}{z-x_i}\quad\text{and}\quad f_{x,t}(z)=z-\frac{1}{tS_x(z)}.
\]
Let $F/L$ be a finite extension of valued fields. Let $y=(y_1,\ldots,y_d)\in\Conf_d(\A^1)(F)$ be a second ordered $d$-configuration. Define
\[
S_y(z)=\sum_{i=1}^d\frac{w_i}{z-y_i}\quad\text{and}\quad f_{y,t}(z)=z-\frac{1}{tS_y(z)}.
\]
Assume that $f_{y,t}^{\circ2}$ has only simple fixed points and that
\begin{equation}\label{eq:exact2multeq}
\left\lbrace \rho_{f_{x,t}}(z_0)\colon z_0\in\Fix(f_{x,t}^{\circ2})\setminus\Fix(f_{x,t})\right\rbrace=\left\lbrace \rho_{f_{y,t}}(z_0)\colon z_0\in\Fix(f_{y,t}^{\circ2})\setminus\Fix(f_{y,t})\right\rbrace
\end{equation}
as multisets. Then, after a conjugacy $\sigma\in\Aff(F)$ if necessary, all $y_i$ lie in the valuation ring of $F$ and their reductions are pairwise distinct.
\end{proposition}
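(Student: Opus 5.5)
The plan is a non-archimedean cluster argument. I normalize $y$ so that a tightest cluster of its coordinates becomes a residue-separated configuration. Then I show that, if this cluster is not everything, the two-cycle multipliers of its internal pairs force the identity of Lemma~\ref{lem:cl}(1) onto the generic reference configuration $x$, contradicting Lemma~\ref{lem:cl}(2).

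\emph{Step 1: normalization.} Among all pairs $i\neq j$, choose one, say $(i_0,j_0)$, for which $\delta:=v(y_{i_0}-y_{j_0})$ is maximal. Set
\[
C:=\{i\colon v(y_i-y_{i_0})\geq\delta\}.
\]
By the ultrametric inequality and maximality of $\delta$:
\begin{itemize}
\item every internal difference $y_i-y_{i'}$ with $i,i'\in C$ has valuation exactly $\delta$;
\item for $r\notin C$ and $i\in C$, one has $v(y_i-y_r)=v(y_{i_0}-y_r)<\delta$.
\end{itemize}
Apply $\sigma(z)=(z-y_{i_0})/(y_{j_0}-y_{i_0})\in\Aff(F)$. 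After this conjugacy, which preserves the form of $f_{y,t}$ with the same weights, the following hold:
\begin{itemize}
\item $y_{i_0}=0$, and all internal differences are units;
\item $(y_i-y_r)^{-1}\in\fm_F$ for $i\in C$ and $r\notin C$;
\item the coordinates $y_r$ with $r\notin C$ have negative valuation.
\end{itemize}
If $C=\{1,\ldots,d\}$, every $y_i$ is integral and the reductions are pairwise distinct, so we are done. This covers $d=2$. It remains to rule out a proper $C$ with $\#C\geq2$.

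\emph{Step 2: matching the internal pair multipliers.} On the $x$ side, all differences $x_i-x_j$ are units in $L$. Proposition~\ref{prop:exp} therefore applies to every pair, producing $\binom d2$ two-cycles, that is, $d^2-d$ points. This is exactly the number of points of $\Fix(f_{x,t}^{\circ2})\setminus\Fix(f_{x,t})$, so these cycles account for all of them.

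Consequently, for each pair $(i,j)$ there is exactly one value $\rho^x_{ij}$ in the left multiset of \eqref{eq:exact2multeq} whose rescaling $t^2\rho$ reduces to $1/(w_iw_j)$. It occurs with multiplicity two. The residues $1/(w_iw_j)$ are pairwise distinct because the $w_i$ are algebraically independent.

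On the $y$ side, take $i<j$ in $C$. The unit and inverse-difference hypotheses of Proposition~\ref{prop:exp} hold, so there is a two-cycle whose multiplier $\rho^y_{ij}$ satisfies $t^2\rho^y_{ij}\equiv 1/(w_iw_j)$. The multiset equality then forces $\rho^y_{ij}=\rho^x_{ij}$.

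\emph{Step 3: comparing the $t^{-1}$ coefficients.} Comparing the $t^{-1}$ coefficients in \eqref{eq:rho} gives
\[
\frac{2B_{ij}(x)}{w_iw_j}\equiv\frac{2\Delta^y(s_i^y-s_j^y)}{w_iw_j}\pmod{\fm_F}.
\]
Here $s^y_\ell$ differs from $s^{C}_\ell(y)$ by the terms $w_r/(y_\ell-y_r)$ with $r\notin C$, and these lie in $\fm_F$. Since the characteristic is not $2$ and the $w_i$ are units, we obtain, in the residue field of $F$ (a finite extension of $k(w,x_3,\ldots,x_d)$),
\[
B_{ij}(x)=B^C_{ij}(\bar y)\quad(i<j,\ i,j\in C).
\]
Lemma~\ref{lem:cl}(1), applied to $\bar y$ over the residue field, then yields
\[
\sum_{\substack{i<j\\i,j\in C}}w_iw_jB_{ij}(x)=W_Ce_2(w_C).
\]

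\emph{Step 4: contradiction.} By Lemma~\ref{lem:cl}(2), the difference between the two sides of the last identity is a nonzero rational function of $(w,x)$. Each term $(x_i-x_j)^2/((x_i-x_r)(x_j-x_r))$ is affine invariant. Hence the monomial argument of Lemma~\ref{lem:cl}(2) still shows nonvanishing after specializing $(x_1,x_2)=(0,1)$: the nonzero coefficient of $w_rw_iw_j$ remains nonzero on the slice, by affine invariance. Since $w,x_3,\ldots,x_d$ are algebraically independent over $k$, this function cannot vanish at our point. This is the desired contradiction, so $C$ must be all of $\{1,\ldots,d\}$.

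I expect the main obstacle to be Step 2: justifying that the $y$-side internal cycle multiplier coincides exactly with the $x$-side $(i,j)$ multiplier, rather than only up to leading order. The approach is to use that the $x$-side multiset is completely described by Proposition~\ref{prop:exp}, with distinct residues of $t^2\rho$. Then the only element of the common multiset with residue $1/(w_iw_j)$ is $\rho^x_{ij}$. A further point to check is that the $O(1)$ remainders and the outside contributions do not disturb the $t^{-1}$ coefficient after reduction.
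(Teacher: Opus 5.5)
Your proposal is correct and follows essentially the same route as the paper: rescale by a difference inside a tightest cluster, label the internal two-cycles via the pairwise distinct residues $1/(w_iw_j)$ of $t^2\rho$, compare the $t^{-1}$ coefficients of \eqref{eq:rho}, and contradict Lemma~\ref{lem:cl}(2). The only differences are minor. You normalize directly by a tightest pair and split into the cases $C=\{1,\ldots,d\}$ and $C$ proper, whereas the paper first makes all coordinates integral using a loosest pair, so that a collision forces the tightest cluster to be proper. Your check that the coefficient of $w_rw_iw_j$ stays nonzero on the slice $(x_1,x_2)=(0,1)$ (simply because $x_i,x_j,x_r$ remain distinct) makes explicit a point the paper leaves implicit.
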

\begin{proof}
Since $y\in\Conf_d(\A^1)(F)$, its coordinates are pairwise distinct. If $d=2$, take $a=(y_2-y_1)^{-1}$ and $b=-y_1 a$. Then $ay_1+b=0$ and $ay_2+b=1$, and the conclusion is immediate. Suppose now that $d\geq3$.

Let $m:=\min_{i<j}v(y_j-y_i)<\infty$, and take $i_0<j_0$ with $v(y_{j_0}-y_{i_0})=m$. After conjugacy, we may replace every
$y_i$ by
\[
\frac{y_i-y_{i_0}}{y_{j_0}-y_{i_0}}.
\]
Then $y_i\in\sO_F$ for all $i$, while $y_{i_0}=0$ and $y_{j_0}=1$.

Suppose, for contradiction, that the reductions $(\bar{y}_i)_i$ are not pairwise distinct. In particular, for some indices $i\neq j$, we have $v(y_i-y_j)>0$. Set
\[
q:=\max\left\lbrace v(y_i-y_j)\colon i\neq j\right\rbrace.
\]
Then $0<q<\infty$. Define
\[
i\sim j\quad\Longleftrightarrow\quad v(y_i-y_j)\geq q,
\]
which is clearly an equivalence relation. Take an equivalence class $C\subseteq\{1,\ldots,d\}$ with $\# C\geq2$, and fix two different indices $i_1,j_1\in C$. Note that $C\neq\{1,\ldots,d\}$ because $v(y_{j_0}-y_{i_0})=v(1)=0$. Set $\rho:=y_{j_1}-y_{i_1}\neq0$. Then the maximality of $q$ implies $v(\rho)=q$. For $1\leq r\leq d$, define $\xi_r:=\left(y_r-y_{i_1}\right)/\rho$. Observe that $(\xi_r)_{r\in C}\subseteq\sO_F$, and they have pairwise distinct reductions. If $r\notin C$, then
\[
v(\xi_r)<0\quad\text{and}\quad(\xi_i-\xi_r)^{-1}\in\fm_F\quad(i\in C).
\]
Since $f_{y,t}$ and $f_{\xi,t}$ are affinely conjugate via $z\mapsto\rho z+ y_{i_1}$, they have the same multiplier spectrum.

By Proposition~\ref{prop:nf}, the fixed points of $f_{x,t}$ are $x_0=\infty,x_1=0,x_2=1,x_3,\ldots,x_d$, and we have
\[
\rho_{f_{x,t}}(x_i)=1-\frac{1}{tw_i}\quad (1\leq i\leq d),\quad \rho_{f_{x,t}}(\infty)=-\frac{tW}{1-tW},
\]
where $W=\sum_{i=1}^d w_i$. In particular, all $d+1$ fixed points of $f_{x,t}$ are simple fixed points of $f_{x,t}^{\circ2}$. Proposition~\ref{prop:exp} gives, for every $1\leq i<j\leq d$, a two-cycle $O_{ij}$ whose multiplier has valuation $-2$. Further, these two-cycles are distinct. From
\[
(d+1)+2\binom d2=d^2+1=N_{d,2},
\]
we see that every two-cycle of $f_{x,t}$ must be $O_{ij}$ for some $1\leq i<j\leq d$. Moreover, Proposition~\ref{prop:exp} shows
\begin{equation}\label{eq:t2rhored}
\overline{t^2\rho_{f_{x,t}}(p_{ij})}=\frac{1}{w_iw_j},\quad p_{ij}\in O_{ij},\quad 1\leq i<j\leq d.
\end{equation}
Note that the elements $1/(w_iw_j)$ ($1\leq i<j\leq d$) are pairwise distinct.

Let $i<j$ be in $C$. Proposition~\ref{prop:exp} applies to the configuration $\xi$ for $(i,j)$: all differences between coordinates in $C$ are units, while the inverse differences involving exactly one index outside $C$ are integral. Thus, we get a two-cycle $O_{ij}^\xi$ of $f_{\xi,t}$ associated with $(i,j)$. By \eqref{eq:exact2multeq}, \eqref{eq:t2rhored}, and the distinct values $1/(w_rw_s)$, we conclude that
\[
\rho_{f_{\xi,t}}(p_{ij}^\xi)=\rho_{f_{x,t}}(p_{ij}),
\]
where $p_{ij}^\xi\in O_{ij}^\xi$ and $p_{ij}\in O_{ij}$. Since $2w_iw_j$ is a unit in $\sO_F$, comparing the coefficient of $t^{-1}$ in \eqref{eq:rho} implies
\[
B_{ij}(x)\equiv B_{ij}(\xi)\pmod{\fm_F}\quad(i<j,\ i,j\in C).
\]
All terms involving $r\notin C$ in $B_{ij}(\xi)$ belong to $\fm_F$ because $(\xi_i-\xi_r)^{-1},(\xi_j-\xi_r)^{-1}\in\fm_F$. Thus, we deduce that
\[
\overline{B_{ij}(x)}=\overline{B_{ij}^C(\xi)},
\]
where $B_{ij}^C(\xi)$ is as in Lemma~\ref{lem:cl}. By Lemma~\ref{lem:cl} and the fact that 
\[
k(x_3,\ldots,x_d,w_1,\ldots,w_d)\cap\fm_F=\{0\},
\]
we deduce that
\begin{equation}\label{eq:wBred}
-\sum_{r\notin C}w_r
\sum_{\substack{i<j\\i,j\in C}}w_iw_j
\frac{(x_i-x_j)^2}{(x_i-x_r)(x_j-x_r)}
=\sum_{\substack{i<j\\i,j\in C}}w_iw_jB_{ij}(x)-W_Ce_2(w_C)=0.
\end{equation}
However, since $C$ is proper, \eqref{eq:wBred} contradicts the final assertion of Lemma~\ref{lem:cl}\textup{(2)}. Therefore, the reductions of $y_1,\ldots,y_d$ are pairwise distinct.
\end{proof}

\section{Reconstruction}\label{sec:rec}
\subsection{The ordered fixed-point cover}
Let $d\geq2$ be an integer. Define
\[
\sI_d^{\circ}:=\left\{(c_0,c_1,\ldots,c_d)\in\G_{m,k}^{d+1}\colon\sum_{i=0}^d c_i=1,\ c_j\neq\frac{1}{2}\text{ for all }j\right\}\quad\text{and}\quad\sX_d^{\mathrm{ord}}:=\sI_d^{\circ}\times\sC_{d,k},
\]
which are irreducible $k$-varieties of dimension $d$ and $2d-2$, respectively. We always use the normalization $(x_1,x_2)=(0,1)$ for $[x]\in\sC_d$. For every $\alpha=(c,[x])\in\sX_d^{\mathrm{ord}}(k)$, Proposition~\ref{prop:nf} gives a conjugacy class
\[
\nu_d(\alpha):=[g_{\alpha}]\in\sM_{d,k}(k),\quad g_{\alpha}(z)=z-\left(\sum_{i=1}^d\frac{c_i}{z-x_i}\right)^{-1}.
\]
Using Proposition~\ref{prop:nf}, it is direct to verify that the above formula defines a dominant morphism
\[
\nu_d:\sX_d^{\mathrm{ord}}\to\sM_{d,k}.
\]
Since $\dim\sX_d^{\mathrm{ord}}=2d-2=\dim\sM_{d,k}$, the morphism $\nu_d$ is generically finite.

\smallskip

We next define the two-cycle multiplier polynomial on $\sX_d^{\mathrm{ord}}$. On the Zariski open dense locus in $\sM_d$ where no fixed point of $f$ has multiplier $\pm1$, the $2$-multiplier polynomial for $f$ (see \eqref{eq:mulpoly}) factors as
\begin{equation}\label{eq:split}
\Phi_{2,f}(T)=\prod_{z\in\Fix(f)}\left(T-\rho_f(z)^2\right)\prod_{O\colon\,2\text{-cycle of }f}\left(T-\rho_f(z_O)\right)^2,\quad z_O\in O,
\end{equation}
where two-cycles are counted with multiplicity.

Let $A=\sO(\sX_d^{\mathrm{ord}})$. For $\alpha=(c,[x])\in\sX_d^{\mathrm{ord}}(k)$, we have
\[
\Fix(g_\alpha)=\{x_0=\infty,x_1=0,x_2=1,x_3,\ldots,x_d\}
\]
and
\[
\rho_{g_\alpha}(x_i)=1-\frac{1}{c_i}\neq\pm1\quad(0\leq i\leq d).
\]
Define
\[
P_1(T):=\prod_{i=0}^d\left(T-\left(1-\frac{1}{c_i}\right)^2\right)\in A[T].
\]
Then \eqref{eq:split} gives
\[
\Phi_{2,g_\alpha}(T)=P_1(T)(\alpha)\cdot E_2(T)(\alpha)^2,\quad E_2(T)(\alpha):=\prod_{O\colon\,2\text{-cycle of }g_\alpha}\left(T-\rho_{g_\alpha}(z_O)\right),\quad z_O\in O.
\]
It is direct to verify that $E_2(T)\in A[T]$. For every $c\in\sI_d^{\circ}(k)$, the non-leading coefficients of $E_2(T)$ define a morphism
\[
\sigma_c:\sC_{d,k}\to\A^{\binom{d}{2}}_k.
\]
Clearly, $\left(\tau_{d,2}\circ\nu_d\right)\left(c,[x]\right)$ determines $\sigma_c([x])\in\A^{\binom{d}{2}}(k)$, for every $[x]\in\sC_d(k)$. Define
\[
\Sigma_d:\sX_d^{\mathrm{ord}}\to\sI_d^{\circ}\times\A_k^{\binom{d}{2}},\quad(c,[x])\mapsto\left(c,\sigma_c\left([x]\right)\right).
\]

\begin{theorem}\label{thm:ordered}
Let $d\geq2$ be an integer. Then the induced dominant morphism
\[
\Sigma_d:\sX_d^{\mathrm{ord}}\to\overline{\Sigma_d\left(\sX_d^{\mathrm{ord}}\right)}
\]
is birational. Thus, for a generic ordered fixed-point index vector $c$, $\sigma_c([x])$ generically determines the class $[x]$.
\end{theorem}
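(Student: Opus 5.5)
Since $\Sigma_d$ commutes with the projection to $\sI_d^{\circ}$, the plan is to reduce birationality to two properties of the generic fiber over $\sI_d^\circ$. Let $\eta$ be the generic point of $\sI_d^\circ$, with function field $k(c)$, and let $\bar\eta$ be a geometric point over it. It suffices to show two things for the map $\sigma_{c}:\sC_{d}\to\A^{\binom d2}$ over $\overline{k(c)}$. First, it is generically injective on geometric points, i.e.\ a generic geometric fiber is a single point. Second, it is generically étale onto its image, i.e.\ its differential has rank $d-2$ at the generic point. Together these give degree one and separability of $k(\sX_d^{\mathrm{ord}})/k(\Sigma_d(\sX_d^{\mathrm{ord}}))$, hence birationality. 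The second statement of the theorem is then immediate.

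Both properties are open-type conditions, so I would verify them after the non-archimedean specialization $c_i=tw_i$ for $1\le i\le d$, $c_0=1-tW$. Concretely, I would work over $L=k(w,x_3,\dots,x_d)((t))$, where the reference configuration $x$ is generic, and pass to an algebraic closure. The chosen specialization is dominant enough: the field $k(w)(t)$ receives $k(c)$ injectively, since $c_1,\dots,c_d$ are algebraically independent. So if the fiber of $\sigma_c$ through the generic $[x]$ is a single reduced point after the specialization, then a constructible-set argument shows the same holds generically.

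\emph{Injectivity.} Suppose $[y]$ over $\bar L$ has $\sigma_c([y])=\sigma_c([x])$, so the two-cycle multipliers agree as multisets. First I need $f_{y,t}^{\circ2}$ to have simple fixed points. If it does not, I would treat that as a closed condition and argue via the Hensel lifting below, or perhaps more simply note that equality of $E_2$ already forces all roots to be simple. By Proposition~\ref{prop:noescape} we may normalize $y$ so that its coordinates are integral with distinct reductions. Then Proposition~\ref{prop:exp} applies to every pair of $y$. The pair-labeling shows that $\rho_{f_{y,t}}(p^y_{ij})=\rho_{f_{x,t}}(p_{ij})$ for every $i<j$. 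Comparing $t^{-1}$ coefficients, and using that $2w_iw_j$ is a unit (characteristic $\neq2$), gives $\overline{B_{ij}(y)}=B_{ij}(x)$. Hence $\beta_w([\bar y])=\beta_w([x])$ at the level of residue fields. Here $\bar y$ is a configuration, since its reductions are distinct. By Proposition~\ref{prop:B}, applied with $w,x$ generic, we get $[\bar y]=[x]$.

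\emph{Reducedness and lifting.} Next I upgrade from reductions to actual equality. Consider the analytic map $y\mapsto(t^2\rho_{ij}(y))_{i<j}$ on the residue polydisc above $x$. Rescale its coordinates by subtracting the leading term and multiplying by $t$, so that the reduction becomes $-2B_{ij}(y)/(w_iw_j)$ plus constants. By the last assertion of Proposition~\ref{prop:exp}, the derivatives are controlled. By Proposition~\ref{prop:B}, the Jacobian of $(B_{ij})$ has rank $d-2$ at $x$. Choose $d-2$ coordinates $(i,j)$ with a nonzero minor at $x$ and apply Lemma~\ref{lem:hensel}. This shows that the solution in the polydisc is unique, namely $y=x$ up to $\Aff$, and that it is reduced. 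The same rank computation gives rank $d-2$ of $d\sigma_c$ at the generic point, which supplies separability.

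I expect the main obstacle to be the passage from the specialized statement to the generic $c$. The specialized point $(tw)$ is not a $k$-point but a point over a valued field. One must argue that the fiber cardinality of $\Sigma_d$ over its image is generically constant, equal to the degree times the inseparability factor. One must also check that a fiber of cardinality one with nonzero Jacobian at an $\bar L$-valued point, whose image is the generic point of $\sI_d^\circ\times\sC_d$, forces degree one. This is handled by noting that the $L$-point $(c(t),[x])$ maps to the generic point of $\sX_d^{\mathrm{ord}}$, because $w,x,t$ are algebraically independent. The geometric fiber of $\Sigma_d$ over the generic point of the image therefore consists of exactly one reduced point, which means the degree is one.
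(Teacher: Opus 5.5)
Your proposal is correct and follows essentially the paper's proof: the specialization $c_i=tw_i$ over $L=k(w,x_3,\ldots,x_d)((t))$, Proposition~\ref{prop:noescape} to normalize $y$, pair-labeling plus the $t^{-1}$ coefficient to get $\overline{B_{ij}(y)}=B_{ij}(x)$, Proposition~\ref{prop:B} for $[\bar y]=[x]$, Hensel uniqueness via the Jacobian of $\Theta_I$, and rank $d-2$ for separability. The differences are only bookkeeping, and both are valid: you observe directly that the $L$-point $(c(t),[x])$ lies over the generic point of $\sX_d^{\mathrm{ord}}$, where the paper proves birationality of the base change $\varphi$ over $K_0$ and descends via the rank of a finite locally free morphism; and your second option for simplicity of the fixed points of $f_{y,t}^{\circ2}$ works because all roots of $E_2(y)=E_2(x)$ have valuation $-2$ and $c_i\neq 1/2$, where the paper shrinks $X$ and $Z$ instead. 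Minor slips: in the lifting step you divide $t^2\rho_{ij}-1/(w_iw_j)$ by $t$ rather than multiply; the geometric generic fiber has cardinality equal to the separable degree, i.e.\ the degree divided by the inseparable degree; and passing from the rank of $(\rho_{ij})_{i<j}$ to that of $d\sigma_c$ uses the distinctness of these roots, which the paper encodes in the \'etale cover $R_{\mathrm{ord}}$.
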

\begin{proof}
If $d=2$, then $\sC_2$ consists of one point. Hence $\Sigma_2$ identifies $\sX_2^{\mathrm{ord}}=\sI_2^{\circ}\times\sC_2$ with the graph of the morphism $c\mapsto\sigma_c([0,1])$, and the conclusion follows. Suppose now that $d\geq3$.

Let $\eta$ be the generic point of $\sI_d^{\circ}$. Use coordinates $c_1,\ldots,c_d$ with $c_0=1-\sum_i c_i$, and set
\[
E:=k(c_1,\ldots,c_d)=k(\eta),\quad K_0:=k(w_1,\ldots,w_d)((t)).
\]
The assignment $c_i\mapsto tw_i$ defines an embedding $E\hookrightarrow K_0$. With $(x_1,x_2)=(0,1)$, take $x_3,\ldots,x_d$ algebraically independent over $k(w_1,\ldots,w_d)$, and set
\[
L:=k(w_1,\ldots,w_d,x_3,\ldots,x_d)((t)).
\]

\smallskip

Let
\[
\varphi:X:=\sC_{d,K_0}\to\A_{K_0}^{N},\quad N=\binom{d}{2},
\]
be the base change of $\sigma_\eta$ along $E\hookrightarrow K_0$. We will show that $\varphi$ is generically finite.

For $a=(a_1,\ldots,a_N)\in\A_{K_0}^{N}$, write
\[
Q_a(T):=T^N+a_1T^{N-1}+\cdots+a_N.
\]
Let $U_{\mathrm{sep}}\subseteq\A_{K_0}^{N}$ be the Zariski open dense subset on which the discriminant of $Q_a$ is nonzero. Define
\[
R_{\mathrm{ord}}:=\left\{\left(a,\left(r_{ij}\right)_{1\leq i<j\leq d}\right)\in U_{\mathrm{sep}}\times\A_{K_0}^{N}\colon Q_a(T)=\prod_{1\leq i<j\leq d}(T-r_{ij})\right\}.
\]
It is direct to verify that the projection $\pi:R_{\mathrm{ord}}\to U_{\mathrm{sep}}$ is finite \'etale.

Set
\[
\widetilde{X}:=\varphi^{-1}(U_{\mathrm{sep}})\times_{U_{\mathrm{sep}}}R_{\mathrm{ord}}.
\]
Let $p:\widetilde{X}\to\varphi^{-1}(U_{\mathrm{sep}})$ be the first projection. Since $\pi$ is finite \'etale, so is $p$. For $1\leq i<j\leq d$, let $\rho_{ij}(x)$ denote the multiplier of either point of the $(i,j)$-indexed two-cycle of $f_{x,t}$ given in Proposition~\ref{prop:exp}. The generic configuration $x$ then determines a point $\widetilde{x}\in\widetilde{X}(L)$ by $r_{ij}(\widetilde{x})=\rho_{ij}(x)$ for $1\leq i<j\leq d$. On $\widetilde{X}$, define
\[
\Theta_{ij}:=-\frac{w_iw_j}{2}\left(r_{ij}t-\frac{1}{tw_iw_j}+\frac{1}{w_i}+\frac{1}{w_j}\right).
\]
By Proposition~\ref{prop:exp}, one has
\[
\Theta_{ij}(\widetilde{x})=B_{ij}(x)+O(t),\quad\left.\frac{\partial\Theta_{ij}}{\partial x_r}\right|_{\widetilde{x}}=\left.\frac{\partial B_{ij}}{\partial x_r}\right|_x+O(t).
\]

By Proposition~\ref{prop:B}, there is a set $I$ of $d-2$ pairs $(i,j)$ with $1\leq i<j\leq d$ such that
\begin{equation}\label{eq:detx}
\det\left(\left.\frac{\partial B_{ij}}{\partial x_r}\right|_x\right)_{\substack{(i,j)\in I\\3\leq r\leq d}}\neq0.	
\end{equation}
Here $x$ is regarded as the generic point of $\sC_d$. It follows that
\[
\det\left(\left.\frac{\partial\Theta_{ij}}{\partial x_r}\right|_{\widetilde{x}}\right)_{\substack{(i,j)\in I\\3\leq r\leq d}}\neq0.
\]
By the definition of $\Theta_{ij}$, we see that the morphism $\widetilde{X}\to R_{\mathrm{ord}}$ has differential of rank $d-2$ at $\widetilde{x}$. Because $p$ and $\pi$ are \'etale, the differential of $\varphi$ has rank $d-2$ at $x$. Since $\dim X=d-2$, the morphism $\varphi$ is generically finite.

\smallskip

Next we prove that $\sigma_\eta$ is birational to the closure of its image. We first consider the birationality of $\varphi$ and then descend it to $E=k(\eta)$. Define
\[
Z_E=\overline{\sigma_\eta\left(\sC_{d,E}\right)}\quad\text{and}\quad Z=\left(Z_E\right)_{K_0}.
\]
We have shown that $\varphi:X\to Z$ is generically finite. After shrinking $X$ and $Z$ suitably, we may assume that $f_{u,t}^{\circ2}$ has only simple fixed points for $u\in X$ and that $\varphi:X\to Z$ is finite by \cite[Lemma~29.52.5]{Stacks}.

Via $K_0(Z)\hookrightarrow L$, base change the generic fiber of $\varphi$ to $L$, and choose an algebraic closure $\overline L$. Let $y$ be any $\overline{L}$-point of this fiber. Since the fiber is finite, $y$ is defined over a finite extension $F/L$. Equip $F$ with the unique extension of the $t$-adic valuation. Since $\varphi(y)=\varphi(x)$, the specializations of $E_2(T)$ at $x$ and $y$ are equal. Because both second iterates have only simple fixed points, we have 
\[
\left\{\rho_{f_{x,t}}(z_0)\colon z_0\in\Fix\left(f_{x,t}^{\circ2}\right)\setminus\Fix\left(f_{x,t}\right)\right\}=\left\{\rho_{f_{y,t}}(z_0)\colon z_0\in\Fix\left(f_{y,t}^{\circ2}\right)\setminus\Fix\left(f_{y,t}\right)\right\}
\]
as multisets. By Proposition~\ref{prop:noescape}, we may take a representative $(y_1,\ldots,y_d)$ of $y$ for which $y_1,\ldots,y_d\in\sO_F$ and their reductions in $\kappa_F$ are pairwise distinct. For every pair $(i,j)$ with $i<j$, applying Proposition~\ref{prop:exp} to $f_{y,t}$ gives a two-cycle associated with $(i,j)$, and let $\rho_{ij}(y)$ be the multiplier of this two-cycle. By \eqref{eq:rho}, we have
\[
t^2\rho_{ij}(y)\in\sO_F\quad\text{and}\quad\overline{t^2\rho_{ij}(y)}=\frac{1}{w_iw_j}.
\]
From $\varphi(y)=\varphi(x)$, we also deduce that
\[
\prod_{1\leq i<j\leq d}\left(V-t^2\rho_{ij}(y)\right)=\prod_{1\leq i<j\leq d}\left(V-t^2\rho_{ij}(x)\right).
\]
Since $(1/(w_iw_j))_{i<j}$ are pairwise distinct, by Lemma~\ref{lem:hensel} we see that $\rho_{ij}(y)=\rho_{ij}(x)$ for all $i<j$. Then \eqref{eq:rho} implies
\[
B_{ij}(y)-B_{ij}(x)\in\fm_F\quad(i<j).
\]
Thus,
\begin{equation}\label{eq:Bijred}
B_{ij}(\bar{y})=B_{ij}(\bar{x})\quad(i<j),
\end{equation}
where the reductions are taken in $\kappa_F$.

By Proposition~\ref{prop:B}, for the generic weight $w=(w_1,\ldots,w_d)$, the morphism
\[
\beta_w=\left(B_{ij}\right)_{i<j}:\sC_{d,k(w_1,\ldots,w_d)}\to\A^N_{k(w_1,\ldots,w_d)}
\]
is birational to the closure of its image. Therefore, the geometric generic fiber of $\beta_w$ consists of one point. Combining \eqref{eq:Bijred}, we see that $[\bar{y}]=[x]$ in $\sC_d(\kappa_F)$.

Note that $y_2-y_1\in\sO_F^\times$ is a unit. By an affine conjugacy, we may replace $(y_1,\ldots,y_d)$ by
\[
\left(\frac{y_i-y_1}{y_2-y_1}\right)_{i=1}^d,
\]
which still has integral coordinates with pairwise distinct reductions. Now we have $(y_1,y_2)=(0,1)$. Since $[\bar{y}]=[x]$, uniqueness of the normalized representative of an affine configuration shows $\bar{y}=x$ over $\kappa_F$.

Let $\mathbb D_x\subset\sC_d(F)$ be the residue polydisc consisting of normalized configurations $u=(0,1,u_3,\ldots,u_d)$ satisfying $\bar{u}=x$. Set $h=u-x$, with $h_1=h_2=0$. The construction in Proposition~\ref{prop:exp} expresses every $\Theta_{ij}(x+h)$ as a formal power series in $h_3,\ldots,h_d$ with coefficients in $\sO_F$, convergent for $(h_3,\ldots,h_d)\in\fm_F^{d-2}$. For the set $I$ chosen above, let
\[
\Theta_I=(\Theta_{ij})_{(i,j)\in I},
\]
whose Jacobian at $x$ lies in $\mathrm{GL}_{d-2}(\sO_F)$ by \eqref{eq:detx} and Proposition~\ref{prop:exp}. The equalities $\Theta_{ij}(y)=\Theta_{ij}(x)$ give
\[
0=J_{\Theta_I}(x)(y-x)+E\quad\text{and}\quad\lVert E\rVert\leq\lVert y-x\rVert^2.
\]
Multiplication by the integral inverse Jacobian yields
\[
\lVert y-x\rVert\leq\lVert y-x\rVert^2.
\]
Since $\lVert y-x\rVert<1$, we obtain $y=x$. Thus, the geometric generic fiber of $\varphi$ has exactly one geometric point. We already proved that the differential of $\varphi$ has rank $d-2$ at the generic point of $X$, so $\Omega_{K_0(X)/K_0(Z)}=0$, and hence the finite extension $K_0(X)/K_0(Z)$ is separable. It follows that $[K_0(X):K_0(Z)]=1$, and therefore $\varphi$ is birational.

Return now to the morphism $\sigma_\eta:\sC_{d,E}\to Z_E$. After replacing $Z_E$ by a nonempty Zariski open subset, we may assume that $\sigma_\eta$ is finite locally free by \cite[Lemmas 29.52.5 and 10.118.2]{Stacks}. By the above argument and \cite[Lemma~29.49.4]{Stacks}, $\sigma_\eta$ has rank one. Thus, $[E(\sC_{d,E}):E(Z_E)]=1$ and $\sigma_\eta:\sC_{d,E}\to Z_E$ is birational.

\smallskip

Finally, the first projection of $\Sigma_d(c,[x])=\left(c,\sigma_c([x])\right)$ is $c$. Thus, the extension of function fields induced by 
\[
\Sigma_d:\sX_d^{\mathrm{ord}}\to\overline{\Sigma_d\left(\sX_d^{\mathrm{ord}}\right)}
\]
is the same finite extension obtained from $\sigma_\eta$. Since the latter extension has degree one, the morphism $\Sigma_d$ is birational.
\end{proof}

\subsection{Proof of the main theorem}

\begin{proof}[Proof of Theorem~\ref{thm:main}]
Fix an integer $d\geq2$, and set $N=\binom{d}{2}$. We first prove the theorem over an algebraically closed field $k$ of characteristic different from $2$.

Define
\[
Y_k:=\overline{\tau_{d,2,k}\left(\sM_{d,k}\right)}.
\]
We prove that the induced dominant morphism $\tau_{d,2,k}:\sM_{d,k}\to Y_k$ is birational.

\smallskip

We first show that $\tau_{d,2,k}$ is generically finite. Theorem~\ref{thm:ordered} shows that $\Sigma_d:\sX_d^{\mathrm{ord}}\to Z_k$ is birational, where $Z_k:=\overline{\Sigma_d\left(\sX_d^{\mathrm{ord}}\right)}\subseteq\sI_d^{\circ}\times\A_k^N$. Then $\dim Z_k=\dim\sX_d^{\mathrm{ord}}=2d-2$. For $c=(c_0,\ldots,c_d)\in\sI_d^{\circ}(k)$, set
\[
\lambda_i(c):=1-\frac{1}{c_i}\in k\setminus\{\pm1\}\quad(0\leq i\leq d)\quad \text{and}\quad P_c(T):=\prod_{i=0}^d\left(T-\lambda_i(c)^2\right).
\]
For $a=(a_1,\ldots,a_N)\in\A^N(k)$, denote $E_a(T):=T^N+a_1T^{N-1}+\cdots+a_N$. For a monic polynomial $R(T)=T^m+b_1T^{m-1}+\cdots+b_m\in k[T]$, define
\[
\mathrm{es}_m(R):=\left(-b_1,b_2,\ldots,(-1)^m b_m\right)\in\A^m(k),
\]
the tuple of elementary symmetric functions of the roots of $R$. Define a morphism
\[
H_d:\sI_d^{\circ}\times\A_k^N\to\A_k^{d+1}\times\A_k^{d^2+1}
\]
by
\[
H_d(c,a):=\left(\mathrm{es}_{d+1}\left(\prod_{i=0}^d\left(T-\lambda_i(c)\right)\right),\mathrm{es}_{d^2+1}\left(P_c(T)E_a(T)^2\right)\right).
\]
Then we have
\[
\tau_{d,2,k}\circ\nu_d=H_d\circ\Sigma_d.
\]
Let
\[
D:=\left\{(c,a)\in\sI_d^{\circ}\times\A_k^N\colon c_i\neq c_j\text{ for }i\neq j\right\},
\]
which is a Zariski open dense subset. We claim that $H_d|_D$ is quasi-finite. Indeed, suppose $H_d(c,a)=H_d(c',a')$ with $(c,a),(c',a')\in D(k)$. Then we deduce that
\[
\prod_{i=0}^d\left(T-\lambda_i(c)\right)=\prod_{i=0}^d\left(T-\lambda_i(c')\right)\quad\text{and}\quad E_a(T)=E_{a'}(T).
\]
Since both $(\lambda_i(c))_i$ and $(\lambda_i(c'))_i$ are tuples of pairwise distinct numbers, there is a unique permutation $\pi\in\mathfrak S_{d+1}$ such that
\[
\lambda_i(c')=\lambda_{\pi(i)}(c)\quad(0\leq i\leq d).
\]
So $c_i'=c_{\pi(i)}$ for $0\leq i\leq d$. In particular, $P_{c'}(T)=P_c(T)$. From $E_a(T)=E_{a'}(T)$, we obtain $a=a'$. Thus, a fiber of $H_d|_D$ over a $k$-point contains at most $(d+1)!$ points, and $H_d|_D$ is quasi-finite. Since $\nu_d$ is dominant,
\[
Y_k=\overline{\tau_{d,2,k}\left(\nu_d\left(\sX_d^{\mathrm{ord}}\right)\right)}=\overline{H_d\left(\Sigma_d\left(\sX_d^{\mathrm{ord}}\right)\right)}=\overline{H_d(Z_k)}.
\]
Since $Z_k\cap D$ is Zariski dense in $Z_k$ and $H_d|_D$ is quasi-finite, the morphism $H_d|_{Z_k}$ is generically quasi-finite. So $\dim Y_k=\dim Z_k=2d-2=\dim\sM_{d,k}$. Therefore, $\tau_{d,2,k}:\sM_{d,k}\to Y_k$ is generically finite.

\smallskip

We next determine the geometric generic fiber of $\tau_{d,2,k}$. Take Zariski open dense subsets $\Omega\subseteq\sX_d^{\mathrm{ord}}$ and $\Omega'\subseteq Z_k$ such that $\Sigma_d|_{\Omega}:\Omega\to\Omega'$ is an isomorphism. Define $B:=\sX_d^{\mathrm{ord}}\setminus\Omega$. Since $B$ is a proper closed subset of $\sX_d^{\mathrm{ord}}$,
\[
\dim\overline{\nu_d(B)}\leq\dim B\leq2d-3.
\]
So $\overline{\nu_d(B)}$ is a proper closed subset of $\sM_{d,k}$. Because $\nu_d$ is dominant, its image contains a nonempty Zariski open subset $M_0\subseteq\sM_{d,k}$. Let $M_1\subseteq\sM_{d,k}$ be the Zariski open locus on which the $d+1$ fixed-point multipliers are pairwise distinct and $f^{\circ2}$ has only simple fixed points. By a standard specialization argument and Proposition~\ref{prop:exp}, we see that $M_1\neq\emptyset$. Define
\[
M^\circ:=\left(M_0\cap M_1\right)\setminus\overline{\nu_d(B)},
\]
which is a Zariski open dense subset of $\sM_{d,k}$. Then
\[
M^\circ\subseteq\nu_d(\sX_d^{\mathrm{ord}})\quad\text{and}\quad\nu_d^{-1}\left(M^\circ\right)\subseteq\Omega.
\]
Let $C:=\sM_{d,k}\setminus M^\circ$. Then
\[
\dim\overline{\tau_{d,2,k}(C)}\leq\dim C\leq 2d-3<2d-2=\dim Y_k.
\]
Thus, the geometric generic point of $Y_k$ does not lie in $\overline{\tau_{d,2,k}(C)}$, and every point of the geometric generic fiber of $\tau_{d,2,k}$ belongs to $M^\circ$. Let $K$ be an algebraic closure of $k(Y_k)$, and let $[f],[h]\in\sM_d(K)$ be two points of the geometric generic fiber of $\tau_{d,2,k}$. Then $[f],[h]\in M^\circ(K)$. Take
\[
\alpha=(c,[x])\in\sX_d^{\mathrm{ord}}(K)
\]
such that $\nu_d(\alpha)=[f]$. So $[g_\alpha]=[f]$ in $\sM_d(K)$. Since $[f]\in M^\circ$, we have $\alpha\in\Omega(K)$. With $(x_0,x_1,x_2)=(\infty,0,1)$, set
\[
\lambda_i=\rho_{g_\alpha}(x_i)=1-\frac{1}{c_i}\quad(0\leq i\leq d).
\]
The elements $\lambda_0,\ldots,\lambda_d$ are pairwise distinct because $[g_\alpha]\in M_1(K)$. Take $h_0:\Pone_{K}\to\Pone_{K}$ representing $[h]$. Since $S_1(h_0)=S_1(g_\alpha)$, the multisets of fixed-point multipliers coincide, so for each $i$ there exists a fixed point $z_i'\in\Fix(h_0)(K)$ satisfying $\rho_{h_0}(z_i')=\lambda_i$. As the $\lambda_i$ are pairwise distinct, the points $z_0',\ldots,z_d'$ are pairwise distinct, and hence $\Fix(h_0)=\{z_0',\ldots,z_d'\}$. Let $A\in\PGL_2(K)$ be the unique M\"obius transformation satisfying
\[
A(z_0')=\infty,\quad A(z_1')=0,\quad A(z_2')=1,
\]
and set
\[
h_1=A\circ h_0\circ A^{-1},\quad y_i=A(z_i')\quad(0\leq i\leq d).
\]
For $0\leq i\leq d$, the fixed-point index of $h_1$ at $y_i$ is
\[
\frac{1}{1-\rho_{h_1}(y_i)}=\frac{1}{1-\lambda_i}=c_i.
\]
Then Proposition~\ref{prop:nf} implies $h_1=g_{c,[y]}$. Thus,
\[
\beta:=(c,[y])\in\sX_d^{\mathrm{ord}}(K)\quad\text{and}\quad \nu_d(\beta)=[h].
\]
Since $[h]\in M^\circ$, we also have $\beta\in\Omega(K)$. From $\tau_{d,2}([f])=\tau_{d,2}([h])$, we deduce that $\Phi_{2,g_\alpha}(T)=\Phi_{2,h_1}(T)$. The two maps have the same ordered fixed-point indices $c$, so the factor $P_1(T)(c)$ in \eqref{eq:split} is the same for both. Thus,
\[
P_1(T)(c)E_2(T)(\alpha)^2=P_1(T)(c)E_2(T)(\beta)^2.
\]
Since both $E_2(T)(\alpha)$ and $E_2(T)(\beta)$ are monic, we deduce that
\[
E_2(T)(\alpha)=E_2(T)(\beta).
\]
Then $\sigma_c([x])=\sigma_c([y])$ and $\Sigma_d(\alpha)=\Sigma_d(\beta)$ by definition. Since $\alpha,\beta\in\Omega(K)$ and $\Sigma_d|_\Omega$ is injective, we have $\alpha=\beta$. Applying $\nu_d$ shows $[f]=[h]$. Thus, the geometric generic fiber of $\tau_{d,2,k}$ consists of one point.

\smallskip

We show that the finite extension $k(\sM_{d,k})/k(Y_k)$ is separable. Set
\[
L=k(Y_k)\quad\text{and}\quad F=k(Z_k).
\]
View the coordinate functions $c_0,\ldots,c_d,a_1,\ldots,a_N$ on $\sI_d^\circ\times\A_k^N$ as elements of $F$ by restriction to $Z_k$. Since $Z_k\cap D$ is Zariski dense in $Z_k$, the elements $c_0,\ldots,c_d$, and hence $\lambda_i:=1-1/c_i$ ($0\leq i\leq d$) are pairwise distinct in $F$. Note that the coefficients of $Q_c(T):=\prod_{i=0}^d(T-\lambda_i)$ belong to $L$, because they are the coordinate
functions of the first component of $H_d|_{Z_k}$ up to $\pm1$. Thus, each $\lambda_i$ is algebraic over $L$. Moreover, $Q_c(T)$ is separable over $L$. Consequently,
\[
L':=L(\lambda_0,\ldots,\lambda_d)=L(c_0,\ldots,c_d)
\]
is a finite separable extension of $L$. Similarly, we see that $P_c(T)E_a(T)^2\in L[T]$. From $Q_c(T)\in L[T]$ we see that $P_c(T)=\prod_{i=0}^d\bigl(T-\lambda_i^2\bigr)\in L[T]$. So $E_a(T)^2\in L[T]$. Set $a_0=1$. For $1\leq r\leq N$, the coefficient of $T^{2N-r}$ in $E_a(T)^2$ is
\[
2a_r+\sum_{\substack{i+j=r\\1\leq i,j\leq r-1}}a_i a_j.
\]
Since $\mathrm{char}k\neq2$, it follows inductively from $E_a(T)^2\in L[T]$ that $a_1,\ldots,a_N\in L$, i.e., $E_a(T)\in L[T]$. Since the functions $c_0,\ldots,c_d,a_1,\ldots,a_N$ generate $F=k(Z_k)$, we have
\[
F=L'=L(c_0,\ldots,c_d).
\]
In particular, $F/L$ is finite separable. Since $\Sigma_d:\sX_{d,k}^{\mathrm{ord}}\to Z_k$ is birational, we have $k(\sX_d^{\mathrm{ord}})\simeq F$. The morphism $\nu_d$ gives a tower of fields
\[
L=k(Y_k)\subseteq k(\sM_{d,k})\subseteq k(\sX_d^{\mathrm{ord}})\simeq F.
\]
Thus, the intermediate extension $k(\sM_{d,k})/L$ is separable as well.

Recall that we have shown that the geometric generic fiber of $\tau_{d,2,k}:\sM_{d,k}\to Y_k$ consists of one point. Thus, $[k(\sM_{d,k}):k(Y_k)]=1$, i.e., $\tau_{d,2,k}:\sM_{d,k}\to Y_k$ is birational.

\smallskip

By \cite[Lemma~29.52.6]{Stacks}, take a Zariski open dense subset $V\subseteq Y_k$ such that $\tau_{d,2,k}:U:=\tau_{d,2,k}^{-1}(V)\to V$ is an isomorphism. For every $x\in U(k)$, we have
\[
\tau_{d,2,k}^{-1}\left(\tau_{d,2,k}(x)\right)=\{x\}.
\]
Thus, $\tau_{d,2,k}$ is generically injective.

\medskip

Finally, let $F$ be an arbitrary field of characteristic different from $2$, and set
\[
Y_F:=\overline{\tau_{d,2,F}\left(\sM_{d,F}\right)}.
\]
Let $\overline{F}$ be an algebraic closure of $F$. Since both $\sM_{d,F}$ and $\A_F^{d+1}\times\A_F^{d^2+1}$ are affine, the ideal defining $Y_F$ is the kernel of the homomorphism on coordinate rings induced by $\tau_{d,2,F}$. The extension $F\subseteq\overline{F}$ is flat, so this kernel commutes with base change. Hence $(Y_F)_{\overline{F}}=Y_{\overline{F}}$. By the proved result over $\overline{F}$, the base change of $\tau_{d,2,F}:\sM_{d,F}\to Y_F$ to $\overline{F}$ is birational. In particular, $\tau_{d,2,F}$ is generically finite. After restricting to a nonempty Zariski open subset of $Y_F$, we may assume that it is finite locally free by \cite[Lemmas~29.52.5 and 10.118.2]{Stacks}. Let $r$ be its rank. Finite locally free rank is preserved by base change. So we have $r=1$. It follows that
\[
F(Y_F)\xrightarrow{\sim}F(\sM_{d,F}),
\]
so $\tau_{d,2,F}:\sM_{d,F}\to Y_F$ is birational. This proves the theorem over every field of characteristic different from $2$.
\end{proof}

\subsection{Characteristic two}\label{subsec:char2}
Assume throughout this subsection that $k$ is algebraically closed of characteristic $2$. We briefly indicate how the preceding argument can be modified in characteristic $2$. The fixed-index normal form, the construction of two-cycles by a multivariable Hensel's lemma, and the leading term $1/(t^2w_iw_j)$ used to label the pairs remain unchanged. The difference is that the term $2B_{ij}(x)$ in \eqref{eq:rho} vanishes, so the coefficient of $t^{-1}$ no longer contains the information on fixed-point configurations. One must retain one further term and work with the squared coordinates
\[
X_i:=x_i^2.
\]

Set
\[
V_w(X)_i=\sum_{r\neq i}\frac{w_r(w_r+w_i)}{X_i-X_r}\quad\text{and}\quad
D_{ij}(X)=(X_i-X_j)\left(V_w(X)_i-V_w(X)_j\right).
\]
The pair-cycle expansion needed in place of \eqref{eq:rho} is
\begin{equation}\label{eq:rhochar2}
\rho_{f_{x,t}}(p_{ij})
=
\frac{1}{t^2w_iw_j}
+\frac{1}{t}\left(\frac{1}{w_i}+\frac{1}{w_j}\right)
+1+\frac{D_{ij}(X)}{w_iw_j}+O(t).
\end{equation}
Here the $O(t)$ term is analytic on the same affinoid neighborhoods as in Proposition~\ref{prop:exp}. Thus, after the leading term has identified the pair $(i,j)$, the coefficient of $t^0$ recovers $D_{ij}(X)$, instead of $B_{ij}(x)$.

For $c=(c_1,\ldots,c_d)$, set
\[
R(z)=\sum_{i=1}^d\frac{c_i}{z-x_i},\quad
A_c(Z)=\sum_{i=1}^d\frac{c_i}{Z-X_i},\quad
A_{c^2}(Z)=\sum_{i=1}^d\frac{c_i^2}{Z-X_i}.
\]
If
\[
f(z)=z-\frac{1}{R(z)},
\]
then in characteristic $2$, one has
\begin{equation}\label{eq:frobchar2}
f(z)^2=F_{c,X}(z^2),\quad
F_{c,X}(Z):=Z+\frac{1}{A_{c^2}(Z)},
\end{equation}
and
\begin{equation}\label{eq:derchar2}
f'(z)=1+\frac{A_c(z^2)}{A_{c^2}(z^2)}.
\end{equation}
Consequently, for fixed $c$, every periodic multiplier depends on the fixed-point configuration only through $X=(x_1^2,\ldots,x_d^2)$.

The reconstruction statement also has a direct analogue. For generic weights $w$, the morphism
\[
\delta_w:\sC_{d,k}\to\A_k^{\binom d2},\quad
[X]\mapsto\left(D_{ij}(X)\right)_{i<j},
\]
is birational to the closure of its image. Its proof uses the same factorization argument as Proposition~\ref{prop:fac}, now applied to $M(X,V_w(X))$, together with the corresponding nonvanishing conditions that exclude the swapped factorization.

There is likewise a replacement for Lemma~\ref{lem:cl}. If $C\subseteq\{1,\ldots,d\}$ has at least two elements, define
\[
V_i^C:=\sum_{r\in C\setminus\{i\}}\frac{w_r(w_r+w_i)}{X_i-X_r},\quad
D_{ij}^C:=(X_i-X_j)(V_i^C-V_j^C),
\]
and
\[
W_C:=\sum_{i\in C}w_i,\quad
E_C:=\sum_{\substack{i<j\\i,j\in C}}w_iw_j(w_i+w_j).
\]
Then
\begin{equation}\label{eq:Dcluster}
\sum_{\substack{i<j\\i,j\in C}}w_iw_jD_{ij}^C=W_CE_C.
\end{equation}
If $C$ is proper and the $w_i,X_i$ are independent variables, then
\[
\sum_{\substack{i<j\\i,j\in C}}w_iw_jD_{ij}(X)-W_CE_C
\]
is a nonzero rational function. Its contribution from indices outside $C$ is
\[
\sum_{r\notin C}w_r
\sum_{\substack{i<j\\i,j\in C}}w_iw_j(X_i-X_j)
\left(
\frac{w_r+w_i}{X_i-X_r}-\frac{w_r+w_j}{X_j-X_r}
\right).
\]

These results give the same boundary-exclusion argument as in the main theorem. If the reduced configuration has a collision, choose a cluster at the largest valuation of a pairwise difference and rescale as in Proposition~\ref{prop:noescape}. For each internal pair, the leading term in \eqref{eq:rhochar2} determines its label. Subtracting the two universal leading terms and reducing the equality of multipliers gives equality of the corresponding reduced $D_{ij}$. Equation~\eqref{eq:Dcluster} excludes such a proper subset for a generic reference configuration. The birational reconstruction from the $D_{ij}$ then determines the reduced configuration in the variables $X_i=x_i^2$. Applying Lemma~\ref{lem:hensel} to the equations in $X_3,\ldots,X_d$, using \eqref{eq:frobchar2}--\eqref{eq:derchar2} to express the relevant multiplier functions in these variables, gives equality of the $X_i$. Since Frobenius is injective on a field, this implies equality of the $x_i$. Thus, the multiplier morphism up to level two is generically injective in characteristic $2$.

However, the function-field conclusion is different. A dominant generically finite morphism $\psi:U\to V$ between integral $k$-varieties is called \emph{generically radicial} if $k(U)/k(V)$ is purely inseparable. The generic injectivity above implies that $\tau_{d,2,k}:\sM_{d,k}\to\overline{\tau_{d,2,k}(\sM_{d,k})}$ is generically radicial. One can compute that
\[
\deg\,\tau_{d,2,k}=4^{d-2}.
\]
In particular, for $d\geq3$, $\tau_{d,2}$ is not birational, although it is generically injective on geometric points.

\section{Applications and conjectures}\label{sec:app}
\subsection{Applications}
For an integer $d\geq2$, let
\[
Y_d=\overline{\tau_{d,2}(\sM_d)}\subseteq\A^{d+1}\times\A^{d^2+1}
\]
be the Zariski closure of the image of $\tau_{d,2}$ over $\Q$.

Multiplier spectra give natural regular functions on $\sM_d$; see \cite[\S~4.5]{Sil07}. If $d=2$, then
\[
\Q[\sM_2]=\Q\left[\text{coordinate functions of }S_1\right];
\]
see \cite[\S~4.6]{Sil07}, \cite[Theorem~1.2]{Sil98}, and \cite{Mil93}. For the function fields, our main theorem shows that the first two multiplier levels generate the entire function field of $\sM_d$, for every degree $d\geq2$.
\begin{corollary}[Function-field generation]\label{cor:fcnfield}
For every integer $d\geq2$,
\[
\Q(\sM_d)=\Q\left(\text{coordinate functions of }S_1\text{ and }S_2\right).
\]
\end{corollary}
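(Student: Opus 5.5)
This will be deduced from Theorem~\ref{thm:main} with $F=\Q$. The theorem gives that the dominant morphism $\tau_{d,2,\Q}:\sM_{d,\Q}\to Y_d$ is birational, so the induced inclusion of function fields
\[
\tau_{d,2}^*:\Q(Y_d)\hookrightarrow\Q(\sM_d)
\]
is an isomorphism. The plan is to identify the image of this inclusion with the subfield generated by the coordinate functions of $S_1$ and $S_2$.

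First, since $Y_d$ is the Zariski closure of the image, the morphism $\tau_{d,2}$ is dominant onto $Y_d$. Hence pullback of rational functions is a well-defined injective map $\Q(Y_d)\to\Q(\sM_d)$. Second, $Y_d$ is an integral closed subscheme of affine space $\A^{d+1}\times\A^{d^2+1}$. It is integral because it is the closure of the image of the integral scheme $\sM_{d,\Q}$. Its function field is therefore generated over $\Q$ by the restrictions of the ambient coordinates $\sigma_{j,1},\sigma_{j,2}$. Third, the pullback of the coordinate $\sigma_{j,n}$ under $\tau_{d,2}$ is by definition the regular function $\sigma_{j,n}(f)$ on $\sM_d$. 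So the image of $\Q(Y_d)$ is exactly
\[
\Q(\text{coordinate functions of }S_1,S_2)\subseteq\Q(\sM_d).
\]
Birationality says this image is all of $\Q(\sM_d)$, which is the claimed equality.

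The only point needing care is that the birationality in Theorem~\ref{thm:main} is over the base field $\Q$ itself, not merely after passing to $\overline{\Q}$. The last paragraph of the proof of Theorem~\ref{thm:main} provides exactly this: it descends birationality to an arbitrary field $F$ via the finite locally free rank argument. With that in hand, no real obstacle remains; the corollary is a restatement of birationality in function-field terms.
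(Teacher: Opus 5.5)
Your proposal is correct and matches the paper's proof. Theorem~\ref{thm:main}, applied with $F=\Q$ (using its descent step from $\overline{\Q}$), gives birationality of $\tau_{d,2,\Q}:\sM_{d,\Q}\to Y_d$. So pullback identifies $\Q(Y_d)$ with $\Q(\sM_d)$, and $\Q(Y_d)$ is generated by the restricted ambient coordinates, whose pullbacks are exactly the coordinate functions of $S_1$ and $S_2$.
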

\begin{proof}
Theorem~\ref{thm:main} shows that $\tau_{d,2}:\sM_d\to Y_d$ is birational. Hence $\Q(Y_d)\simeq\Q(\sM_d)$. The function field $\Q(Y_d)$ is generated by the restrictions of the affine coordinate functions on $\A^{d+1}\times\A^{d^2+1}$, which are precisely the coordinate functions of $S_1$ and $S_2$.
\end{proof}
Note that the same conclusion holds over every field of characteristic different from $2$.
\begin{remark}
For all integers $d\geq4$ and $n\geq1$, using the techniques in \cite[\S~6.6]{Sil07} and \cite[Lemma~2.1]{Pak19}, one can prove that $\tau_{d,n,\overline{\Q}}$ is not injective. For $d=3$, direct computations show that the two rational maps
\[
g_{\pm}(z)=\frac{2z^2\left(z+16\pm\sqrt{285}\right)}{4z^2+\left(11\pm\sqrt{285}\right)z+19\pm\sqrt{285}}
\]
satisfy
\[
[g_+]\neq[g_-],\quad\tau_{3,2}([g_+])=\tau_{3,2}([g_-]),\quad\tau_{3,3}([g_+])\neq\tau_{3,3}([g_-]).
\]
Consequently, for every $d\geq3$,
\[
\overline{\Q}[\text{coordinate functions of }S_1\text{ and }S_2]\subsetneq\overline{\Q}[\sM_d].
\]
Since $\overline{\Q}/\Q$ is faithfully flat, we conclude that for every $d\geq3$,
\[
\Q[\text{coordinate functions of }S_1\text{ and }S_2]\subsetneq\Q[\sM_d].
\]
\end{remark}

\begin{corollary}
Let $d\geq3$ be an integer. The morphism
\[
S_{1,k}:\sM_{d,k}\to\A_k^{d+1}
\]
is not generically finite. Consequently, $2$ is the least integer $N$ for which $\tau_{d,N,k}$ is generically injective on $\sM_{d,k}$.
\end{corollary}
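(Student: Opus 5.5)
The plan is a dimension count, followed by a direct appeal to Theorem~\ref{thm:main}. First I would bound $\dim\overline{S_{1,k}(\sM_{d,k})}$ from above. By Proposition~\ref{prop:nf}, a generic $[f]$ has $d+1$ simple fixed points. Their multipliers $\lambda_0,\ldots,\lambda_d$ satisfy $\lambda_i\neq1$ and the rational fixed-point index formula
\[
\sum_{i=0}^d\frac{1}{1-\lambda_i}=1.
\]
The image of $S_{1,k}$ is therefore contained, generically, in the image under the elementary symmetric functions $\mathrm{es}_{d+1}$ of the hypersurface
\[
\Big\{(\lambda_0,\ldots,\lambda_d)\colon \lambda_i\neq1,\ \textstyle\sum_i(1-\lambda_i)^{-1}=1\Big\}\subseteq\A^{d+1}.
\]
This hypersurface has dimension $d$, and $\mathrm{es}_{d+1}$ is finite. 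Equivalently, $S_{1,k}\circ\nu_d$ factors through the $d$-dimensional variety $\sI_d^\circ$ via $c\mapsto\mathrm{es}_{d+1}\big(\prod_i(T-(1-1/c_i))\big)$, and $\nu_d$ is dominant. Either way I get $\dim\overline{S_{1,k}(\sM_{d,k})}\leq d$.

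Since $\dim\sM_{d,k}=2d-2$ and $d<2d-2$ exactly when $d\geq3$, the theorem on dimension of fibers shows that every fiber of $S_{1,k}$ over a point of its image has dimension at least $2d-2-d=d-2\geq1$. So $S_{1,k}$ is not generically finite. In fact every nonempty fiber is positive-dimensional.

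For the second claim, note that $\tau_{d,1,k}=S_{1,k}$. If $\tau_{d,1,k}$ were injective on a nonempty open $U$, then for $x\in U(k)$ the fiber $\tau_{d,1,k}^{-1}(\tau_{d,1,k}(x))$ would equal $\{x\}$. This contradicts the fact that every nonempty fiber has dimension at least $d-2\geq1$ (the definition concerns the full fiber, so no restriction to $U$ is needed). Hence $N=1$ fails, and $N=2$ works by the generic injectivity part of Theorem~\ref{thm:main}. Therefore $2$ is the least such integer.

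The only step requiring some care is the upper bound on the image dimension. I would phrase it cleanly through the dominant map $\nu_d$, so that the factorization through $\sI_d^\circ$ is immediate from the formula $\rho_{g_\alpha}(x_i)=1-1/c_i$. The rest is routine.
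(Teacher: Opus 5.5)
Your proof is correct and follows the paper's argument: the fixed-point index relation forces $\dim\overline{S_{1,k}(\sM_{d,k})}\leq d<2d-2$, and Theorem~\ref{thm:main} then supplies the case $N=2$. The one difference is how you get the dimension bound: the paper rewrites the index relation as a polynomial in the elementary symmetric functions and must check that this polynomial is nonzero in every characteristic, whereas your factorization of $S_{1,k}\circ\nu_d$ through the $d$-dimensional $\sI_d^\circ$ (with $\nu_d$ dominant) avoids that check, and your fiber-dimension step correctly handles the full-fiber definition of injectivity.
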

\begin{proof}
On the open locus of $f$ having only simple fixed points, let $\lambda_0,\ldots,\lambda_d$ be the multipliers of fixed points. By the fixed-point index formula \cite[Exercise~1.17]{Sil07}, we obtain
\[
\sum_{i=0}^d\prod_{j\neq i}(1-\lambda_j)-\prod_{j=0}^d(1-\lambda_j)=0,
\]
whose expression in the elementary symmetric functions is nonzero in every characteristic. Thus,
\[
\dim\overline{S_{1,k}(\sM_{d,k})}\leq d+1-1=d<2d-2=\dim\sM_{d,k};
\]
so $\tau_{d,1,k}=S_{1,k}$ is not generically finite on $\sM_{d,k}$. On the other hand, Theorem~\ref{thm:main} shows the generic injectivity for $\tau_{d,2,k}$.
\end{proof}

The following corollary is a direct consequence of Theorem~\ref{thm:main}:
\begin{corollary}
Let $d\geq2$, and let $k$ be an algebraically closed field of characteristic different from $2$. There are $2d-2$ coordinate functions among $S_1$ and $S_2$, denoted $u_1,\ldots,u_{2d-2}$, and a Zariski open dense subset $U\subseteq\sM_{d,k}$ such that
\[
u=(u_1,\ldots,u_{2d-2}):U\to\A_k^{2d-2}
\]
is \emph{\'etale}. In particular, when $k=\C$, these functions are local holomorphic coordinates at every point of $U(\C)$.
\end{corollary}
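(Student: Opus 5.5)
The plan is to derive the étale statement from the birationality in Theorem~\ref{thm:main}, applied over $k$, together with the generic smoothness of both the source and the image. Put $Y_k=\overline{\tau_{d,2,k}(\sM_{d,k})}$. By Theorem~\ref{thm:main} the morphism $\tau_{d,2,k}:\sM_{d,k}\to Y_k$ is birational. Hence there are Zariski open dense subsets $U_0\subseteq\sM_{d,k}$ and $V_0\subseteq Y_k$ such that $\tau_{d,2,k}|_{U_0}:U_0\to V_0$ is an isomorphism. I will shrink $V_0$ to lie in the smooth locus of $Y_k$. This locus is open and dense because $k$ is perfect and $Y_k$ is integral. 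After this shrinking, $U_0$ is smooth of dimension $2d-2$, and $V_0$ is smooth of dimension $2d-2$ inside $\A_k^{d+1}\times\A_k^{d^2+1}$.

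Next I choose the coordinates. At the generic point of $Y_k$, the module $\Omega_{k(Y_k)/k}$ is a $k(Y_k)$-vector space of dimension $2d-2$. Since $Y_k$ is generically smooth, it is spanned by the differentials $dy_1,\dots,dy_{d^2+d+2}$ of the ambient coordinate functions, which are exactly the coordinates of $S_1$ and $S_2$. I therefore select $2d-2$ of them, say $u_1,\dots,u_{2d-2}$, whose differentials form a basis. Equivalently, the restrictions $u_1,\dots,u_{2d-2}$ form a separating transcendence basis of $k(Y_k)=k(\sM_{d,k})$ over $k$. Then $u=(u_1,\ldots,u_{2d-2}):Y_k\to\A_k^{2d-2}$ induces an isomorphism on cotangent spaces at the generic point. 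Since the non-étale locus is closed, $u$ is étale on a nonempty open subset $V_1\subseteq V_0$; at these points both varieties are smooth of the same dimension and the differential is an isomorphism. I then set $U:=\tau_{d,2,k}^{-1}(V_1)\cap U_0$. On $U$, the map $u\circ\tau_{d,2,k}$ is the composition of an isomorphism with an étale morphism, so it is étale. By construction its components are coordinate functions of $S_1$ and $S_2$ pulled back to $\sM_{d,k}$.

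For $k=\C$, an étale morphism between smooth complex varieties is a local biholomorphism, by the holomorphic inverse function theorem. Hence $u_1,\dots,u_{2d-2}$ are local holomorphic coordinates at each point of $U(\C)$. The only point needing care is the separability of the selected transcendence basis, and this is exactly where $\mathrm{char}\,k\neq2$ enters. Birationality gives $k(\sM_{d,k})=k(Y_k)$, which rules out the purely inseparable behavior described in \S\ref{subsec:char2}; $k(Y_k)$, like any function field over the perfect field $k$, is separably generated. Beyond that, I expect no real obstacle, since everything else is standard generic smoothness.
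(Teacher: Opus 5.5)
Your proposal is correct and follows essentially the same route as the paper. Birationality gives full rank $2d-2$ of the differential at the generic point, so you can pick $2d-2$ coordinate functions of $S_1,S_2$ with independent differentials, shrink to a smooth open set, and conclude étaleness. The only cosmetic difference is that you run this argument on the smooth locus of $Y_k$ and pull it back through the isomorphism $U_0\cong V_0$, whereas the paper works directly on the smooth locus of $\sM_{d,k}$.
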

\begin{proof}
By Theorem~\ref{thm:main}, the morphism $\tau_{d,2,k}$ is birational to the closure of its image. Hence its differential has rank $2d-2$ at the generic point of $\sM_{d,k}$. Therefore, among the coordinate functions of $S_1$ and $S_2$, we can choose $2d-2$ functions $u_1,\ldots,u_{2d-2}$ whose differentials are linearly independent at the generic point. Since $k$ is perfect, the smooth locus of $\sM_{d,k}$ is nonempty and Zariski open. After shrinking to a nonempty open subset $U$ of this smooth locus on which $du_1,\ldots,du_{2d-2}$ remain linearly independent, the differential of $u$ is an isomorphism at every point of $U$. Since both $U$ and $\A_k^{2d-2}$ are smooth of the same dimension, $u$ is \'etale on $U$.
\end{proof}

\subsection{Conjectures and open questions}
\subsubsection*{Length spectra}
For a degree-$d$ rational map $f$ over $\C$ and $n\geq1$, define
\[
L_n(f)\in\R_{\geq0}^{N_{d,n}}
\]
by evaluating the elementary symmetric functions at the absolute values of the multipliers of the fixed points of $f^{\circ n}$; see \cite[Section~1.5]{JX25}. The sequence $(L_n(f))_{n\geq1}$ is called the \emph{length spectrum} of $f$. Ji and Xie proved that, outside the flexible Latt\`es locus, the full length spectrum has finite fibers \cite[Theorem~1.5]{JX23}. They conjectured that the full length spectrum map is generically injective up to complex conjugation \cite[Conjecture~1.9]{JX25}. Theorem~\ref{thm:main} suggests the following two-level strengthening.
\begin{conjecture}[Small-period length rigidity]
For every $d\geq2$, there is a proper Zariski closed subset $E_d\subsetneq\sM_d$ defined over $\R$ such that, for every $[f]\in\sM_d(\C)\setminus E_d(\C)$ and every $[g]\in\sM_d(\C)$,
\[
L_1(f)=L_1(g)\quad\text{and}\quad L_2(f)=L_2(g)
\]
imply
\[
[g]=[f]\quad\text{or}\quad[g]=[\overline{f}].
\]
Here $[\overline{f}]$ denotes the conjugacy class obtained by applying complex conjugation to the coefficients of a representative of $[f]$.
\end{conjecture}

\subsubsection*{Combinations of multipliers}
\begin{question}[Two levels]
For integers $d\geq2$ and $n_2>n_1\geq1$, is the morphism
\[
(S_{n_1},S_{n_2}):\sM_{d,k}\to\A^{N_{d,n_1}}_{k}\times\A^{N_{d,n_2}}_{k}
\]
generically injective?
\end{question}
Theorem~\ref{thm:main} shows that $\tau_{d,2,k}=(S_1,S_2)$ is generically injective. See \cite[\S~5]{Zha26} for some related questions.

\subsubsection*{Non-injective locus}
For $d\geq2$ and $m\geq1$, define
\[
R_{d,m,\C}:=\sM_{d,\C}\times_{\prod_{j=1}^m\A_{\C}^{N_{d,j}}}\sM_{d,\C}
\]
with the reduced structure, where both morphisms to the product are $\tau_{d,m}$. The sequence $(R_{d,m,\C})_{m\geq1}$ forms a descending chain of closed subsets of $\sM_{d,\C}\times\sM_{d,\C}$. By Noetherianity,
\[
R_{d,\infty,\C}:=\bigcap_{j\geq1}R_{d,j,\C}=R_{d,m,\C}
\]
for all sufficiently large $m>0$.

If $f=h_1\circ h_2$ with $\deg h_1,\deg h_2\geq2$, then $h_2\circ h_1$ is an \emph{elementary transformation} of $f$. Ji and Xie conjecture that Latt\`es maps and the equivalence generated by elementary transformations are the only obstructions to the injectivity of the full multiplier spectrum morphism; see \cite[Conjecture~1.7]{JX25}.
\begin{question}
Let $\mathcal{L}_d\subseteq\sM_{d,\C}$ be the locus of Latt\`es maps, and let $\Delta_d\subseteq\sM_{d,\C}\times\sM_{d,\C}$ be the diagonal. For $d\geq2$, classify the positive-dimensional irreducible components $Z$ of $R_{d,2,\C}$ satisfying
\[
Z\not\subseteq\Delta_d\quad\text{and}\quad Z\not\subseteq\mathcal L_d\times\mathcal L_d.
\]
\end{question}

\subsection*{Acknowledgment}
The author would like to thank Junyi~Xie, Valentin~Huguin, and Igors~Gorbovickis for helpful comments on a preliminary version of the paper.
\subsection*{AI Disclosure}
A first proof of the characteristic zero case of Theorem~\ref{thm:main} was found by the Rethlas system (see \cite{Rethlas}) with OpenAI's models. The author then independently checked the argument, corrected several errors and gaps, modified the proof, and rewrote the manuscript in full. The author takes full responsibility for all statements, proofs, and conclusions in the paper.

\end{document}